\numberwithin{equation}{section}
\newcommand{\R}{\mathbb R}
\newtheorem{theorem}{Theorem}[section]
\newtheorem{corollary}[theorem]{Corollary}
\newtheorem{definition}[theorem]{Definition}
\newtheorem{remark}[theorem]{\it \rmfamily Remark}
\newtheorem{example}[theorem]{Example}
\newtheorem{lemma}[theorem]{Lemma}
\newtheorem{hypothesis}{\it \rmfamily Hypothesis}
\newtheorem{proposition}[theorem]{Proposition}
\newcommand{\bth}{\begin{theorem}}
\def\eth{\end{theorem}}
\newcommand{\bpr}{\begin{proposition}}
\newcommand{\epr}{\end{proposition}}
\newcommand{\bco}{\begin{corollary}}
\newcommand{\eco}{\end{corollary}}
\newcommand{\ble}{\begin{lemma}}
\newcommand{\ele}{\end{lemma}}
\newcommand{\bpf}{\begin{proof}}
\newcommand{\epf}{\end{proof}}
\newcommand{\bex}{\begin{example}}
\newcommand{\eex}{\end{example}}
\newcommand{\bdf}{\begin{definition}}
\newcommand{\edf}{\end{definition}}
\newcommand{\bre}{\begin{remark}}
\newcommand{\ere}{\end{remark}}
\newcommand{\beq}{\begin{equation}}
\newcommand{\eeq}{\end{equation}}
\newcommand{\bal}{\begin{aligned}}
\newcommand{\eal}{\end{aligned}}
\newcommand{\ben}{\begin{enumerate}}
\newcommand{\een}{\end{enumerate}}
\newcommand{\beqr}{\begin{eqnarray*}}
\newcommand{\eeqr}{\end{eqnarray*}}
\def\R{{\mathbb R}}
\def\S{{\mathbb S}}
\def\lan{{\langle}}
\def\ran{{\rangle}}
\def\hh{{\vskip 0.5 mm \noindent }}
\def\qed{\hfill \hbox{\hskip 6pt\vrule
width6pt height6pt depth1pt  \hskip1pt}
\smallskip}
\begin{document}

\title{ \Large \bf PATHWISE UNIQUENESS FOR SINGULAR SDEs
 DRIVEN BY STABLE PROCESSES
\thanks{ 2010
Mathematics Subject Classification. Primary 60H10, 34F05. Secondary
60J75, 35B65. Supported by the M.I.U.R. research project Prin 2008
``Deterministic and stochastic methods in the study of evolution
problems''.
% and by the Isaac Newton Institute for Mathematical %Sciences
% (program   SPD 85).
 } }

%\author{E. Priola
%\\ \\ {\small  Dipartimento di Matematica,
%Universit\`a di Torino,} \\ {\small
%via Carlo Alberto 10,   10123, Torino, Italy }\\
%{\small  e-mail  enrico.priola@unito.it   } }

\author{E. Priola }

\date{}

\maketitle

\begin{center}
\textbf{Abstract}
\end{center}
 We prove
 pathwise uniqueness
 for stochastic differential
 equations
  driven by  non-degenerate
  symmetric $\alpha$-stable  L\'evy processes with values in $\R^d$
   having a bounded and
   $\beta$-H\"older continuous  drift term. We assume
  $\displaystyle{ \beta > 1 - {\alpha}/{2} }$ and $\alpha \in [ 1, 2)$.
The proof  requires analytic regularity results for
  associated  integro-differential operators
   of Kolmogorov type.
 We also study differentiability
 of solutions with  respect to initial conditions
and the homeomorphism  property.
 %  \end{abstract}

%60H10   View Publications   (1973-now)  Stochastic %ordinary differential equations [See also 34F05]
%
%60J75   View Publications   (1973-now)  Jump %processes
%34F05   View Publications   (1973-now)  Equations and %systems with randomness
%35B65   View Publications   (1980-now)  Smoothness and %regularity of solutions

%{\vskip 5 mm }
% \noindent \textbf{MSC (2000)}
 %{\bf Mathematics  Subject Classification (2000):}
% 60H10, 35B65, 60J75,  34F05.

%\vspace{2.5 mm} \noindent {\bf Key words:} stochastic differential
%equations, stable processes, pathwise uniqueness, H\"older
%continuity.

\vspace{2.3 mm}

\section{Introduction}

In this paper we prove a pathwise uniqueness result for the
following
 SDE
\begin{equation} \label{SDE}
 X_{t}=x+\int_{0}^{t}b\left( X_{s}\right)  \mathrm{d}s+L_{t},
 \quad
  x \in \R^d, \;\; t \ge 0,
\end{equation}
 where $b: \R^d \to \R^d $ is bounded and
 $\beta$-H\"older continuous
 %for some  index $\beta \in (0,1)$,
  and $L=
 (L_t) $ is a non-degenerate
 $d$-dimensional symmetric  $\alpha$-stable L\'evy process,
  $d \ge 1$.

  Currently,   there is a great   interest in understanding
  pathwise uniqueness for SDEs when $b$ is not
  Lipschitz continuous or, more generally,
  when  $b$ is  singular enough so  that
  the corresponding  deterministic equation \eqref{SDE}
  with $L= 0$ is not well-posed.
   A remarkable result in this direction was
 proved by Veretennikov in \cite{Ver} (see also \cite{Zv74}
  for $d=1$).
 He was able to prove uniqueness
 when $b : \R^d \to \R^d$ is only Borel and bounded and
  $L $ is a standard $d$-dimensional Wiener process.
 This result has been  generalized in various directions
 in \cite{GM}, \cite{KR05}, \cite{Za}, \cite{D},
  \cite{FGP},
  \cite{DaPratoFlandoli}, \cite{FF}.

   The situation changes  when $L$ is not a Wiener process but is
  a symmetric
 $\alpha$-stable process, $\alpha \in (0,2)$.
  Indeed, when $d=1$ and $\alpha <1$, Tanaka,
  Tsuchiya and Watanabe prove in \cite[Theorem 3.2]{Tanaka}
  that even a bounded
  and $\beta$-H\"older continuous $b$ is not enough to ensure
  pathwise uniqueness  if
  $\alpha + \beta <1$ (they consider drifts like
   $b(x) = $sign$(x) \, (|x|^{\beta} \wedge 1)$ and initial
   condition
  $x=0$).
   On the other hand,
  when $d =1$ and $\alpha \ge 1$, they show pathwise
   uniqueness
  for any continuous and bounded $b$.

 In this paper we prove  pathwise
 uniqueness  in any dimension $d \ge 1$,
 assuming  that $\alpha \ge 1$ and
 $b$ is bounded and $\beta$-H\"older continuous
  with $\displaystyle { \beta > 1- \alpha/2}$. Our proof
 is different from
 the one in \cite{Tanaka} and is inspired by \cite{FGP}.
  The  assumptions on the $\alpha$-stable L\'evy process $L$
  which we consider are collected in Section 2 (see in particular
  Hypothesis \ref{nondeg}). Here we only
  mention
  two significant  examples which satisfy our hypotheses.
  The first  is  when $L = (L_t)$ is a
 standard $\alpha$-stable
 process (symmetric and rotationally invariant), i.e.,
  the characteristic function of the random variable $L_t$ is
  \beq \label{stand}
 E [e^{i \langle L_t , u\rangle }]
= e^{-t c_{\alpha} |u|^{\alpha}}, \;\;\; u \in \R^d, \; t \ge 0,
\eeq
 where $c_{\alpha}$ is a positive constant.
The second example is $L= (L^1_t , \ldots, L^d_t)$, where $L^1$,
 $\ldots, L^d$ are independent
 one-dimensional symmetric stable
 processes of index $\alpha$.
 In this case
 \beq \label{stand1}
 E [e^{i \langle L_t , u\rangle }]
  = e^{ - t k_{\alpha} (
  |u_1|^{\alpha} + \ldots + |u_d|^{\alpha})},
  \;\; u \in \R^d, \; t \ge 0,
\eeq
 where $k_{\alpha}$ is a positive constant.
 Martingale problems for
  SDEs driven by  $(L^1_t , \ldots, L^d_t)$
  have been recently studied
 (see \cite{basschen} and  references therein).

 We prove  the following result.

\begin{theorem}
\label{uno} Let $L$ be a symmetric $\alpha$-stable process
 with $\alpha \in [1, 2)$, satisfying Hypothesis \ref{nondeg}
 (see Section 2).
 Assume that
 $b \in C_{b}^{\beta}\left(  \mathbb{R} ^{d} ; \R^d \right) $
  for some $\displaystyle{ \beta \in (0,1)}$ such that
$$
 \beta > 1 - \frac{\alpha}{2}.
$$
Then pathwise uniqueness holds for equation (\ref{SDE}).
 Moreover, let $X^x = (X_t^x)$ be the solution starting at $x \in\mathbb{R}^{d}$. We have:

\hh (i) for any $t \ge 0,$ $p \ge 1$, there exists
  a constant $C(t,p)>0$ (depending also on $\alpha, \beta$ and $L=(L_t))$  such that
 \beq \label{ciao}
E[\, \sup_{0 \le s \le t} |X_s^x - X_s^y|^p\, ] \, \le C(t,p) \,\,
 |x-y|^p,\;\;\; x,\, y \in \R^d;
  \eeq
(ii) for any $t \ge 0$, the mapping: $x \mapsto X_t^x$ is a  homeomorphism from $\R^d$ onto $\R^d$, $P$-a.s.;

\hh(iii) for any $t \ge 0$, the mapping: $x \mapsto X_t^x$ is
 a $C^1$-function
 %differentiable
  on $\R^d$, $P$-a.s..
\end{theorem}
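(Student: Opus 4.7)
The plan is to carry out a Zvonkin-type transformation adapted to the non-local setting, in the spirit of \cite{FGP}. The first step is to study the resolvent equation
\[
\lambda u - \mathcal{L} u - b\cdot \nabla u \;=\; b,
\]
where $\mathcal{L}$ is the generator of $L$ (a non-local operator of fractional-Laplacian type whose symbol is determined by the Lévy exponent of $L$). Using Schauder-type regularity estimates for this integro-differential operator on Hölder spaces — the ``analytic regularity'' promised in the abstract — I would show that for $\lambda$ large enough, there exists a unique bounded solution $u \in C_b^{\alpha+\beta}(\R^d;\R^d)$. The crucial numerical consequence is that $\nabla u$ is then $(\alpha+\beta-1)$-Hölder; the hypothesis $\beta > 1-\alpha/2$ is exactly what is needed so that $\alpha+\beta-1 > \alpha/2$, giving ``room'' for the subsequent stochastic estimates. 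Choosing $\lambda$ still larger, one may further ensure $\|\nabla u\|_\infty \le 1/2$, so that $\phi(x) := x + u(x)$ is a global $C^1$-diffeomorphism of $\R^d$ with $\phi$ and $\phi^{-1}$ Lipschitz.

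The second step is the actual uniqueness proof. Given two solutions $X,\tilde X$ of \eqref{SDE} with the same driver, apply an Itô formula for non-local operators to $u(X_t)$ and $u(\tilde X_t)$. Using the equation satisfied by $u$, the singular drift $b$ is cancelled and replaced by the regular term $\lambda u$ plus a stochastic integral against the compensated Poisson random measure. Writing $Y_t=\phi(X_t)$, $\tilde Y_t=\phi(\tilde X_t)$, one obtains
\[
Y_t - \tilde Y_t \;=\; \lambda \int_0^t \bigl(u(X_s)-u(\tilde X_s)\bigr)\,ds \;+\; M_t,
\]
where $M_t$ is a martingale whose jumps are controlled by the Hölder modulus of $\phi$ on the differences $X_{s-}-\tilde X_{s-}$. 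Because $\phi$ is bi-Lipschitz, $|X-\tilde X|\asymp |Y-\tilde Y|$; combining Burkholder–Davis–Gundy-type bounds for stable stochastic integrals (where the Hölder exponent $\alpha+\beta-1 > \alpha/2$ is what makes the jump-integral controllable) with Gronwall yields pathwise uniqueness. Running the same argument with two initial data $x,y$ and taking $L^p$-norms proves the Lipschitz-type bound (i).

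Parts (ii) and (iii) are then largely classical consequences. Continuity of $x\mapsto X_t^x$ follows from (i) together with Kolmogorov's continuity criterion applied to the increment estimates after the transformation. Injectivity and non-explosion of the inverse flow come from analogous bounds applied in the $Y$-coordinates (where the drift is Lipschitz in $x$), so that $x\mapsto X_t^x$ is a homeomorphism. For (iii), one differentiates the transformed SDE formally: the candidate derivative $\eta_t=\nabla_x Y_t^x$ satisfies a linear SDE whose coefficients involve the $C^1$ coefficients of the transformed equation, and one verifies existence of this derivative in $L^p$ by a standard difference-quotient argument; differentiability of $X^x=\phi^{-1}(Y^x)$ follows by the chain rule.

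The main obstacle, unsurprisingly, is step one: obtaining sharp Schauder-type estimates $\|u\|_{C^{\alpha+\beta}} \le C\bigl(\|b\|_{C^\beta}+\|f\|_{C^\beta}\bigr)$ for the non-local equation $\lambda u-\mathcal L u - b\cdot \nabla u = f$ when $b$ is merely Hölder and $\mathcal L$ is the generator of a possibly anisotropic stable process (as in \eqref{stand1}). The Bessel-potential machinery used in the Brownian case has to be replaced by Riesz-type fractional estimates, and the anisotropic case forces a careful analysis of the Lévy symbol; it is here that the non-degeneracy Hypothesis \ref{nondeg} enters decisively. A secondary delicate point is justifying Itô's formula for $C^{\alpha+\beta}$ functions against the compensated Poisson measure when $\alpha+\beta<2$; this has to be done by mollification, using that $\alpha+\beta>\alpha$ to control the remainder in the jump expansion.
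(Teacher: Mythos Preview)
Your proposal is correct and follows essentially the same strategy as the paper: solve the resolvent equation $\lambda u-\mathcal L u-b\cdot Du=b$ via Schauder estimates to obtain $u\in C_b^{\alpha+\beta}$ with $\|Du\|_0$ small, apply It\^o's formula (justified by mollification, exactly as you indicate) to $u(X_t)$ so that the singular drift is traded for $\lambda\int_0^t u(X_s)\,ds$ plus a compensated-jump martingale, and then close a Gronwall argument using the key second-difference bound $|u(a+z)-u(a)-u(a'+z)+u(a')|\le C\|u\|_{1+\gamma}|a-a'|\,|z|^{\gamma}$ with $\gamma=\alpha+\beta-1>\alpha/2$. For (ii) and (iii) the paper does exactly what you sketch, pushing the equation through $\psi(x)=x+u(x)$ to obtain a Lipschitz SDE, but then invokes Kunita's off-the-shelf flow theorems (\cite{Ku}, Theorems~3.4 and~3.10) rather than redoing the difference-quotient argument; in particular the paper needs $\|Du\|_0<1/3$ (not $1/2$) so that the map $y\mapsto y+g(y,z)$ appearing in the transformed jump coefficient is itself a homeomorphism, a hypothesis required by Kunita's homeomorphism theorem.
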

  All these assertions
 % All the assertions (i)-(iii)
   require   that
  $L$ is non-degenerate.
 Estimate \eqref{ciao} replaces the standard
 Lipschitz-estimate which holds
 without expectation $E$ when $b$ is Lipschitz continuous.
 % (in such case $C(t,p)$ is also independent of $L$).
 Assertion (ii) is the  so-called  homeomorphism
 property of  solutions (we refer to \cite{A}, \cite{protter} and \cite{Ku}; see also
    \cite{Za1} for the case of  Log-Lipschitz coefficients).
   Note that  existence of strong solutions
   for \eqref{SDE}
 follows easily
 by a compactness argument
   (see
 the comment before Lemma \ref{dis}).
  We mention  that   existence of
 weak solutions when $b$ is only measurable and bounded is
 investigated in \cite{Kurekov}.
 Since $C_{b}^{\beta'} (  \mathbb{R} ^{d} , \R^d )
  \subset C_{b}^{\beta} (  \mathbb{R} ^{d} , \R^d ) $
 when $0 < \beta \le \beta'$,
 our uniqueness result holds true for any
  $\alpha \ge 1$  when  $
  \beta  \in (1/2, 1).$
  Theorem \ref{uno} implies the existence
 of a stochastic flow for \eqref{SDE}
 (see Remark \ref{flow1})
 and  gives
  a partial   answer to a question posed  by L. Mytnik.

  The proof of the main result is given in Section 4. As in
  \cite{FGP} our method
 is based on
   an It\^o-Tanaka trick which requires suitable analytic regularity
   results.
 Such  results are proved in Section 3. They
  provide global
   Schauder estimates
     for the following resolvent equation on  $\R^d$
 \begin{equation} \label{resolv}
 \lambda u -
 {\cal L} u  - b \cdot Du = g,
\end{equation}
 where $\lambda>0$ and  $g \in C^{\beta}_b (\R^d)$
  are given and we assume
  $\alpha \ge 1$ and $\alpha + \beta >1$.
   Here
 $\cal L$ is the generator of the L\'evy
  process $L$  (see \eqref{gener}, \cite[Section 6.7]{A}
  and \cite[Section 31]{sato}).
 If $L $ satisfies  \eqref{stand} then ${\cal L}$
 coincides with the
 fractional Laplacian
 $-(- \triangle)^{\alpha/2}$
  on infinitely differentiable functions $f $
 with compact support   (see \cite[Example 32.7]{sato}),
  i.e., for any $x \in \R^d,$
 \beq \label{sch}
-(- \triangle)^{\alpha/2} f(x) = \int_{\R^d} \big(
 f (x+y) -  f (x)
   - 1_{ \{  |y| \le 1 \} } \, \,  y \cdot D f (x)   \, \big)
   \frac{\tilde c_{\alpha}}{|y|^{d+ \alpha}}dy.
    \eeq
 Schauder estimates are simpler to  prove when
  $\alpha >1$.
  In such a case, assuming  in addition  that
  ${\cal L} = -(- \triangle )^{\alpha/2}$, i.e.,
  $L $
   is a standard
   $\alpha $-stable process,
   %(see \eqref{stand}),
  these
    estimates can be  deduced  from the theory of
    fractional powers of
       sectorial operators (see \cite{L}). We also mention
       \cite[Section 7.3]{bassJFA} where  Schauder
    estimates  are proved when  $\alpha >1$
        and  ${\cal L}$ has the form \eqref{sch}
      but with variable coefficients, i.e.,
      $\tilde c_{\alpha } = \tilde c_{\alpha} (x,y)$.
   The limit case $\alpha =1$ in \eqref{resolv}
    requires a
   special attention even  for the fractional Laplacian
   ${\cal  L} = -(- \triangle )^{1/2}$.
    Indeed in this case ${\cal L}$ is of the ``same order'' of
     $b \, \cdot  D$. To treat $\alpha =1$, we use a localization
     procedure which is based on Theorem \ref{sta} where Schauder
     estimates are proved in the case of  $b(x) = k$, for
     any $x \in \R^d$,
  showing that the Schauder constant is independent of $k$
   (see also Remark \ref{al}).

  In order to prove Theorem \ref{uno}, in Section 4
   we   apply    It\^o's formula to  $u(X_t)$, where
   $u \in C^{\alpha + \beta}_b$ comes from Schauder estimates
    for \eqref{resolv} when $g =b$ (in such case \eqref{resolv}
    must be understood  componentwise).
  This is needed to perform the It\^o-Tanaka trick
 and find a new equation for $X_t$ in which
 the singular
  term $\int_0^t b(X_s)ds$  of \eqref{SDE} is replaced
 by more regular terms. Then uniqueness and
  \eqref{ciao} follow by
    $L^p$-estimates
   for stochastic integrals.
   Such estimates  require the deterministic
 Lemma \ref{dis} and the condition
$\displaystyle{ \alpha/2 + \beta
>1}$. In addition, properties (ii) and (iii)
 are obtained  transforming \eqref{SDE}
 into a form suitable for applying the results in \cite{Ku}.
 % We stress  that our uniqueness result  holds
 %  more generally if
 %  in \eqref{SDE} we consider  also  terms which satisfy a
 % Lipschitz-type condition
 % (see  Remark \ref{lip}).

 We will use the letter $c$ or $C$ with subscripts for
 finite positive
 constants whose  precise  value is unimportant;
  the constants   may
 change from
    proposition to proposition.

\section {Preliminaries and notation}

 General references for this section are \cite{A},
  \cite[Chapter 2]{SamTaqqu}, \cite{sato} and \cite{Zab}.

Let  $\lan u, v \ran $ (or $u\cdot v$)  be  the euclidean inner
product between $u$ and $v \in \R^d$, for any $d \ge 1$;
  moreover
$|u| = \lan u,u\ran^{1/2}$. If  $D \subset \R^d$
 we denote by $1_D$ the indicator function of $D$. The
  Borel $\sigma$-algebra of $\R^d$ will be indicated by
   ${\cal B}(\R^d)$. All the measures  considered in the sequel will be positive and Borel.
    A
    measure $\gamma$ on $\R^d$ is called symmetric if
   $\gamma (D) = \gamma (-D)$, $D \in {\cal B}(\R^d)$.

 \vskip 1mm Let us fix $\alpha \in (0,2)$.
 In \eqref{SDE} we
 consider
  a $d$-dimensional \textit{ symmetric $\alpha $-stable  process}
     $L=(L_t)$,  $d \ge 1$,
      defined on a fixed
  stochastic basis
  $(\Omega, {\cal F}, ({\cal F}_t)_{t \ge 0}, P)$ and
  ${\cal F}_t$-adapted; the
  stochastic basis  satisfies
  the usual assumptions (see
   \cite[page 72]{A}).
   Recall that   $L$ is a
   L\'evy process
    (i.e.,
  it is  continuous in probability,
  it  has  stationary increments,
     c\`adl\`ag  trajectories, $L_t - L_s$ is independent of
     ${\cal F}_s$, $0 \le s \le t$,
      and $L_0=0$) with the additional
     property that its  characteristic function is given by
 \beq \label{itol}
 E[ e^{i \langle  L_t ,  u\rangle }] =
 e^{- t \psi(u)}, \;\;  \;
  \psi(u)= - \int_{\R^d} \Big(  e^{i \langle u,y \rangle }
   - 1 - \, { i \langle u,y
\rangle} \, 1_{ \{ |y| \le 1 \} } \, (y) \Big ) \nu (dy),
  \eeq
 $u \in \R^d$, $t \ge 0$,  where
   $\nu$
   is a    measure such that
 \beq \label{spec}
 \nu (D) = \int_{\S} \mu(d \xi) \int_0^{\infty} 1_D (r \xi)
 \frac{dr}{r^{1+ \alpha}}, \;\;\; D \in {\cal B}(\R^d),
 \eeq
 for some symmetric, non-zero  finite measure $\mu$
  concentrated on the
  unitary sphere $\S =  \{ y \in \R^d \, : \, |y| = 1 \} $
  (see \cite[Theorem 14.3]{sato}).
  %  it follows that
  % $\nu$ is symmetric as well.

  {\vskip 1mm }The measure $\nu$ is called the
 L\'evy (intensity) measure
 of $L$ and \eqref{itol} is a special case of  the
  L\'evy-Khintchine  formula.
  The measure $\nu$  is
 a $\sigma$-finite  measure
on $\R^d$ such that $ \nu (\{ 0\})=0$ and
 $  \int_{\R^d} (1 \wedge
|y|^2 ) \, \nu(dy)
 <\infty,$
 with  $1 \wedge |\cdot | = \min (1, |\cdot|)$.
   Note that formula
  \eqref{spec} implies that
   \eqref{itol} can be rewritten as
$$
  \psi(u)= - \int_{\R^d} \big( \cos (\langle u,y \rangle)
   - 1 \big ) \nu (dy)
$$
\beq \label{bo} =
  - \int_{\S} \mu (d \xi)
  \int_0^{\infty} \frac{\cos (\langle u, r \xi \rangle)
   - 1 }{r^{1+ \alpha}} \; dr =  c_{\alpha}
    \int_{\S} |\langle u,  \xi \rangle |^{\alpha} \mu (d \xi),
    \;\; u \in \R^d
 \eeq
(see also \cite[Theorem 14.13]{sato}).
 The  measure $\mu$ is
 called the spectral measure
 of the stable process $L$. In this paper we make the
  following  non-degeneracy  assumption
 (cf. \cite{sztonik} and  \cite[Definition 24.16]{sato}).

\begin{hypothesis} \label{nondeg}
{\em  The support of the spectral measure
  $\mu$  is not contained in a proper linear
  subspace
  of $\R^d$.}
\end{hypothesis}
 It is not difficult to show that Hypothesis \ref{nondeg}
 is equivalent to the following assertion: there exists a positive
 constant
  $C_{\alpha}$ such that, for any $u \in \R^d$,
\beq \label{stim}
 \psi (u) \ge C_{\alpha} |u|^{\alpha}.
\eeq
 Condition \eqref{stim} is
 also assumed in \cite[Proposition 2.1]{kolokoltsov}.
 To see that \eqref{stim} implies
 Hypothesis \ref{nondeg}, we argue by contradiction:
  if Supp$(\mu) \subset (M
 \cap \S)$ where
  $M $ is the hyperplane containing all
    vectors orthogonal to some $u_0
   \not =0$, then $\psi (u_0)=0$. To show the converse,
   note that Hypothesis \ref{nondeg} implies that
    for any $v \in \R^d$ with $|v|=1$, we have $\psi (v)>0$
    (indeed, otherwise, we would have $\mu (\{ \xi \in \S \, :\,
     $ $ |\lan v, \xi\ran | >0\}) $ $=0$ which contradicts the
     hypothesis). By using a compactness argument,
      we deduce   that
     \eqref{stim} holds for any $u \in \R^d$ with $|u|=1$. Then,
     writing, for any $u \in \R^d$, $u \not =0$,
 $
 \int_{\S} |\langle u,  \xi \rangle |^{\alpha} \mu (d \xi)
 $ $= |u|^{\alpha} \int_{\S}
 \big|\langle \frac{u}{|u|},
  \xi \rangle \big|^{\alpha} \mu (d \xi)
$,
 we obtain easily \eqref{stim}.

 The infinitesimal generator $\cal L$ of the  process $L$ is given by
 \beq \label{gener}
 {\cal L}f(x) =
\int_{\R^d} \big(  f (x+y) -  f (x)
   - 1_{ \{  |y| \le 1 \} } \, \lan y , D f (x) \ran \big)
   \, \nu (dy), \;\; f \in C^{\infty}_c(\R^d),
 \eeq
 where $C^{\infty}_c(\R^d)$ is the space of all
  infinitely differentiable functions with compact support
 (see \cite[Section 31]{sato}). Let us come back on the
  examples  of $\alpha$-stable processes considered in
  Introduction
   which  satisfy
 Hypothesis \ref{nondeg}.
 The first  is  when $L$ is a
 standard $\alpha$-stable
 process,
 i.e.,
 $\psi (u) =  c_{\alpha} |u|^{\alpha}$.
  In this
  case $\nu $ has  density $\frac{C_{\alpha}}{|x|^{d+ \alpha}}$
  with respect to the Lebesgue measure in $\R^d$. Moreover
 the spectral measure $\mu$ is the normalized surface measure on
 $\S$ (i.e., $\mu$  gives
  a uniform distribution on $\S$;
  see \cite[Section 2.5]{SamTaqqu} and \cite[Theorem 14.14]{sato}).

 The second example is  $L= (L^1_t , \ldots, L^d_t)$,
 see \eqref{stand1}.
 In this case $\psi (u) = k_{\alpha} (|u_1|^{\alpha} + \ldots
+ |u_d|^{\alpha})$ and the L\'evy measure $\nu$ is more singular
since it is
 concentrated on the union  of the coordinates axes,
i.e., $\nu$ has density
$$
 c_1 \Big (1_{ \{ x_2=0, \ldots, x_d=0
 \}  } \frac{1}{|x_1|^{1+ \alpha}} + \ldots +
  1_{ \{ x_1=0, \ldots, x_{d-1}=0
 \}  } \frac{1}{|x_d|^{1+ \alpha}}
 \Big)
$$
 with respect to the Lebesgue measure.
 %(so it
 %is more singular than the
%previous case).
 The spectral measure $\mu$ is a linear combination
 of
 Dirac measures, i.e. $\mu = \sum_{k=1}^d (\delta_{e_k} +
 \delta_{-e_k})$, where $(e_k)$ is the canonical basis in $\R^d$.
 The generator is
$$
{\cal L}f(x) =  \sum_{k=1}^d \int_{\R} [ f (x+ s e_k) - f (x)
   - 1_{ \{  |s| \le 1 \} } \, s  \, \partial_{x_k}f (x)  ]
   \; \frac{c_{\alpha}}{|s|^{1+ \alpha}} ds,
   \; f \in C^{\infty}_c(\R^d).
$$
Let us fix some notation on  function spaces.

 We define  $C_{b}(\mathbb{R}
 ^{d};\mathbb{R}^{k})$, $k,\,d\geq1$, as set of all functions
 $f:\mathbb{R}^{d}\rightarrow\mathbb{R}^{k}$ which are bounded and
 continuous. It is a Banach space endowed with the
 supremum norm $\| f\|_0 = $ $\sup_{x \in \R^d}|f(x)|,$ $f \in
  C_{b}(\mathbb{R}
 ^{d};\mathbb{R}^{k}).$
 Moreover, $C_{b}^{\beta}(\mathbb{R}
 ^{d};\mathbb{R}^{k})$, $\beta \in (0,1)$,
   is the subspace of all $\beta$-H\"older continuous
   functions $f$, i.e., $f$ verifies
\begin{equation} \label{hol}
 [ f]_{\beta}:=\sup_{x\neq y\in\mathbb{R}^{d}}
 \frac{|f(x)-f(y)|}{|x-y|^{\beta}}<\infty.
\end{equation}
 $C_{b}^{\beta}(\mathbb{R}
 ^{d};\mathbb{R}^{k})$ is a Banach space with the norm
 $
  \| \cdot \|_{\beta} = \| \cdot \|_0 + [\cdot ]_{\beta}.
 $
 When $\R^k =\R$, we set
 $C_{b}^{{\beta}}({\mathbb{R}}^{d};{\mathbb{R}^k})
 = C_{b}^{{\beta}}({\mathbb{R}}^{d})$.
  Let $C_{b}^{0}(\mathbb{R}
 ^{d}, {\mathbb{R}}^{k} ) = C_{b}(\mathbb{R}
 ^{d}, {\mathbb{R}}^{k})$ and $[ \cdot ]_0 = \| \cdot\|_0$.
 For any $n \ge 1,$ $\alpha \in [0,1)$, we say that
 $f\in C_{b}^{n+{\alpha}}({\mathbb{R}}^{d})$
 if $f \in  C_{}^{n+{\alpha}}({\mathbb{R}}^{d}) \cap
  C_{b}^{\alpha}
 ({\mathbb{R}}^{d})$
 and, for
all $j=1,\dots,n$, the (Fr\'echet) derivatives $D^{j}f$
 $\in C_{b}^{{\alpha}}  ({\mathbb{R}}^{d};{(\mathbb{R}^{d})^{\otimes (j+1)}} )$.
 The space $C_{b}^{n+{\alpha}}({\mathbb{R}}^{d})$
 is a Banach space endowed with the
 norm
  $
\Vert f\Vert_{n+\alpha}$ $=\Vert f\Vert_{0}+\sum_{k=1}^{n}\Vert
D^{k}f\Vert _{0}+[D^{n}f]_{\alpha}$, $f\in
C_{b}^{n+{\alpha}}({\mathbb{R}}^{d})$.

\bre {\em Hypothesis \ref{nondeg} (or condition \eqref{stim})
 is
 equivalent to
 the following Picard's type condition (see \cite{Pi}):
 there exists $\alpha \in (0,2)$ and   $\displaystyle{
 C_{\alpha}>0}$,
   such that
  the following estimate
 holds, for any $\rho >0$, $u \in \R^d$ with $|u|=1$,
$$
 \int_{ \{ |\lan u, y \ran|\le \rho \} }
 |\lan u, y\ran|^2   \nu (dy) \ge
 C_{\alpha} {\rho}^{2 - \alpha }.
$$
The equivalence follows from the  computation
$$ \int_{ \{ |\lan
u, y \ran|\le \rho  \} } |\lan u, y\ran|^2
 \nu (dy)
= \int_{\S} |\lan u, \xi \ran |^2 \mu(d \xi) \int_0^{\infty} 1_{ \{
| \lan u, \xi \ran|\, \le \, \frac{\rho}{r} \} } \,  r^{1- \alpha}
dr
$$
$$
= \rho^{2- \alpha} \,
 \int_{\S} |\lan u, \xi \ran |^2 \mu(d \xi)
\int_{|\lan u, \xi \ran |}^{\infty} \frac{ds} {s^{3- \alpha}} =
 \frac{\rho^{2- \alpha}}{2- \alpha} \,
 \int_{\S} |\lan u, \xi \ran |^{\alpha} \mu(d \xi).
$$
The Picard's condition is usually imposed on the L\'evy measure
$\nu$ of a non-necessarily stable L\'evy process $L$ in order to
ensure that the
 law
of $L_t$, for any $t>0,$ has a $C^{\infty}$-density with respect to
the Lebesgue measure.
}\ere

\section {Some analytic regularity results  }

 In this section  we prove  existence of regular solutions
  to \eqref{resolv}. This result will be
   achieved through  Schauder estimates
   and will be important  in Section
  4 to prove uniqueness for \eqref{SDE}.

  We will use
  the following
   three properties of the $\alpha$-stable process $L$
 (in the sequel $\mu_t$ denotes
 the law of   $L_t$, $t \ge 0).$

\hh (a) $\mu_t (A) =  \mu_1 (t^{-1/\alpha} A)$, for any $A \in {\cal
B }(\R^d)$, $t>0$ (this scaling property follows  from
\eqref{itol}
 and \eqref{bo});

\hh {(b)} $\mu_t$ has a density $p_t $ with respect to the Lebesgue
measure, $t>0$; moreover $p_t \in C^1(\R^d)$ and its spatial
 derivative $Dp_t \in L^1(\R^d, \R^d)$
  (this is a consequence of Hypothesis
\ref{nondeg});

\hh {(c)} for any $\sigma > \alpha$, we have by \eqref{spec}
 \beq \label{tr}
 \int_{\{ |x| \le 1  \}} |x|^{\sigma} \nu (dx) < \infty.
\eeq
 The fact that (b) holds can be deduced by  an argument of
 \cite[Section 3]{sztonik}. Actually, Hypothesis \ref{nondeg}
 implies the following stronger result.

 \ble For any $\alpha \in (0,2)$, $t>0$, the density $p_t
  \in C^{\infty}(\R^d)$ and all
    derivatives $D^k p_t $ are integrable on $\R^d$,
   $k \ge 1$. \ele
\begin{proof}
We only show that $p_t \in C^{\infty}(\R^d)$ and  $Dp_t \in
L^1(\R^d, \R^d)$, following \cite{sztonik};
  arguing in a similar
 way one can obtain the full  assertion. By \eqref{stim},
 we know that $e^{-t \psi (u)} \le e^{ct|u|^{\alpha}}$,
   $u \in \R^d$, and so by the inversion formula of Fourier
 transform (see \cite[Proposition 2.5]{sato}) $\mu_t$ has a density $p_t \in L^1(\R^d) \cap C_0(\R^d)$,
 \beq \label{de}
 p_t (x) = \frac{1}{(2 \pi)^d} \int_{\R^d}
 e^{- i \lan x, z\ran} e^{-t \psi (z)} dz, \;\; x \in \R^d, \; t > 0.
\eeq
 Note that (a) implies that  $p_t (x) = t^{-d/\alpha} p_1 (t^{- 1/\alpha} x)$.
Thanks to \eqref{stim} one can differentiate infinitely many times
under the integral sign and see that $p_t \in C^{\infty} (\R^d)$.
 Let us fix $j=1, \ldots, d$ and check that
 the partial derivative $\partial_{x_j}
 p_t \in L^1(\R^d)$.
 By the scaling property (a) it is enough to consider $t=1$.
 By writing  $\psi = \psi_1 + \psi_2$,
$$
 \psi_1 (u) = - \int_{ \{ |y| \le 1\} } \big( \cos (\langle u,y \rangle)
   - 1 \big ) \nu (dy),\;\; \psi_2 =
   \psi - \psi_1,
$$
$$
 \partial_{x_j}p_1 (x) =  \frac{1}{(2 \pi)^d} \int_{\R^d}
 e^{- i \lan x, z\ran} \big( (-i z_j ) e^{- \psi_1 (z)}
 \big)  e^{- \psi_2 (z)}  dz, \;\; x \in \R^d.
$$
We find easily that  $\psi_1 \in C^{\infty}(\R^d)$
 and so, using also \eqref{stim}
 we deduce that $\psi_1$ is in the Schwartz space ${\cal S} (\R^d)$.
  In particular, there exists
 $f_1 \in L^1(\R^d) $ such that the Fourier transform
$\hat f_1 (z) = (-i z_j) e^{- \psi_1 (z)} $. On the other hand (see
\cite[Section 8]{sato}), there exists an infinitely divisible
probability measure $\gamma$ on $\R^d$ such that the Fourier
transform $\hat \gamma (z) = e^{- \psi_2 (z)}$. By \cite[Proposition
2.5]{sato} we infer that
 $\hat {f_1 * \gamma}$ $= \hat f_1 \, \cdot \hat \gamma$.
 By the inversion formula
 we deduce  that $\partial_{x_j}p_1 (x) =
 (f_1 * \gamma) (x)$
and this proves that  $\partial_{x_j}p_1 \in L^1(\R^d)$.
\end{proof}

Remark that (c) implies that the expression of  ${\cal L} f$
 in \eqref{gener} is  meaningful for any
 $f \in C^{1+ \gamma}_b (\R^d)$ with
 $1+ \gamma > \alpha$.  Indeed ${\cal L}f(x)$
 can be decomposed into the sum of two integrals, over
 $\{ |y| >1\}$ and $\{ |y| \le 1\}$ respectively.
 The  first integral is finite since  $f$ is bounded.
 To treat  the
 second one, we can use the estimate
 \begin{align} \label{rit}
 & | f(y + x) - f(x)
   -  \, y \cdot D f (x) |
\\ \nonumber
& \le \int_0^1 |D f (x + ry) -  D f (x)
 |\, |y| dr \le  \| Df\|_{\gamma} \, |y|^{1+  \gamma},
  \;\; |y| \le 1.
\end{align}
  Note that
 $ {\cal L} f
 \in C_b (\R^d)$ if $f \in  C^{1+ \gamma}_b (\R^d)$ and
 $1+ \gamma > \alpha$.

 \vskip 1mm The next result  is a  maximum principle.
  A related  result is in \cite[Section 4.5]{jacob}.
  This will be used  to prove
  uniqueness of solutions to \eqref{resolv} as well as
  to study  existence.

\begin{proposition} \label{max}   Let $\alpha \in (0,2)$. If
  $u \in C^{1+ \gamma}_b (\R^d)$, $1+ \gamma > \alpha $,
  is a solution
  to $\lambda u - {\cal L} u$ $- b \cdot Du = g$, with $\lambda >0$
   and $g \in C_b(\R^d)$, then
 \begin{equation} \label{max1}
 \| u \|_0 \le \frac{1}{ \lambda} \| g
 \|_0,\;\;\; \lambda>0.
\end{equation}
\end{proposition}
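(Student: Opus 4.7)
The plan is a standard maximum-principle argument adapted to the nonlocal operator $\cal L$. Since $-u$ solves the analogous equation with right-hand side $-g$, it suffices to prove the one-sided bound $\sup_{\R^d} u \le \|g\|_0/\lambda$ and then apply the same bound to $-u$. The difficulty is that $u$ is only bounded and may fail to attain its supremum, so I would use a smooth perturbation to force an exact maximum and then let the perturbation tend to zero.

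Fix $\epsilon > 0$, set $M := \sup_{\R^d} u$, and pick $x_0 \in \R^d$ with $u(x_0) > M - \epsilon$. Choose once and for all $\rho \in C^{\infty}_b(\R^d)$ with $\rho \ge 0$, $\rho(0) = 0$, $\rho(x) \ge 1/2$ for $|x|$ large, and all derivatives of $\rho$ bounded (for instance $\rho(x) = 1 - (1+|x|^2)^{-1/2}$). Define $v(y) := u(y) - \epsilon\,\rho(y - x_0)$. Then $v(x_0) = u(x_0) > M - \epsilon$, whereas $v(y) \le M - \epsilon/2$ for $|y-x_0|$ sufficiently large, so the continuous function $v$ attains its supremum at some point $x^* \in \R^d$.

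At $x^*$ we have $Dv(x^*) = 0$, hence $Du(x^*) = \epsilon (D\rho)(x^* - x_0)$, and $v(x^* + y) \le v(x^*)$ for every $y \in \R^d$. Since $v$ and $\rho(\cdot - x_0)$ belong to $C^{1+\gamma}_b$ with $1 + \gamma > \alpha$, the estimate \eqref{rit} together with property (c) ensures that ${\cal L}v(x^*)$ and ${\cal L}\rho(\cdot - x_0)(x^*)$ are absolutely convergent integrals. Using $Dv(x^*) = 0$, the formula \eqref{gener} gives
\[
 {\cal L}v(x^*) \;=\; \int_{\R^d}\bigl[v(x^*+y) - v(x^*)\bigr]\,\nu(dy) \;\le\; 0,
\]
and by linearity plus translation invariance of $\cal L$,
\[
 {\cal L}u(x^*) \;=\; {\cal L}v(x^*) + \epsilon\,{\cal L}\rho(x^* - x_0) \;\le\; \epsilon\,\|{\cal L}\rho\|_0,
\]
where $\|{\cal L}\rho\|_0$ is finite and independent of $x_0$ because $\rho \in C^{\infty}_b$. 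Evaluating the PDE at $x^*$ then yields
\[
 \lambda u(x^*) \;\le\; \|g\|_0 + \epsilon\bigl(\|{\cal L}\rho\|_0 + \|b\|_0\,\|D\rho\|_0\bigr),
\]
and since $u(x^*) \ge v(x^*) \ge v(x_0) > M - \epsilon$, sending $\epsilon \to 0^+$ gives $\lambda M \le \|g\|_0$, which is the desired bound.

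The main technical obstacle is justifying the inequality ${\cal L}v(x^*) \le 0$ when $v$ is not compactly supported: pointwise nonpositivity of the integrand follows from $v(x^*+y) \le v(x^*)$, but one still needs absolute convergence of the integral in \eqref{gener}. For small $|y|$ this uses the $C^{1+\gamma}$-regularity of $v$ with $1 + \gamma > \alpha$ via \eqref{rit} combined with (c), and for large $|y|$ just the boundedness of $v$ and finiteness of $\nu$ on $\{|y|>1\}$. Equally important is that $\|{\cal L}\rho\|_0$ be uniformly bounded in the almost-maximizer $x_0$, which is why $\rho$ is chosen in $C^{\infty}_b$ with globally bounded derivatives so that the error term $\epsilon(\|{\cal L}\rho\|_0 + \|b\|_0\|D\rho\|_0)$ vanishes as $\epsilon \to 0$.
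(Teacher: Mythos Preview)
Your approach is essentially the same as the paper's: perturb $u$ by a smooth function of order $\epsilon$ so that the perturbed function attains its supremum, use ${\cal L}v(x^*)\le 0$ and $Dv(x^*)=0$ at that point, and let $\epsilon\to 0$. The paper adds a compactly supported bump $u_\epsilon=u+2\epsilon\phi$ (with $\phi(x_\epsilon)=1$, $0\le\phi\le 1$, $\phi=0$ outside $B(x_\epsilon,1)$), whereas you subtract a bounded penalty $v=u-\epsilon\rho(\cdot-x_0)$; both are standard variants of the same idea.

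There is one small numerical slip in your argument for the existence of $x^*$. From $u(x_0)>M-\epsilon$ and $\rho\ge 1/2$ at infinity you obtain $v(x_0)>M-\epsilon$ and $v(y)\le M-\epsilon/2$ for large $|y-x_0|$, but since $M-\epsilon<M-\epsilon/2$ these two inequalities do \emph{not} by themselves force the supremum of $v$ into a bounded region. The fix is trivial: either pick $x_0$ with $u(x_0)>M-\epsilon/4$, or take $\rho$ with $\rho\ge 2$ at infinity, so that $v(x_0)>M-\epsilon/2\ge v(y)$ for large $|y-x_0|$. With this correction your proof is complete and matches the paper's.
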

 \begin{proof}
 Since $-u$ solves the same equation of $u $
  with $g$ replaced by $-g$, it is enough to prove
 that $u(x) \le \frac{\| g\|_0}{\lambda}$, $x \in \R^d$.
 Moreover, possibly replacing  $u$ by
 $u - \inf_{x \in \R^d} u(x)$,
  we may assume that $u \ge 0 $.

 Now we  show that there exists $c>0$
 such that, for any $\epsilon >0$ we can find
 $u_{\epsilon} \in C^{1+ \gamma}_b(\R^d)$ with
 $\| u_{\epsilon} \|_0 =$ $ \max_{x \in \R^d}
 |u_{\epsilon} (x)|$ and also
$$
 \| u - u_{\epsilon} \|_{{1+ \gamma}} < \epsilon \, c.
$$
To this purpose let $x_{\epsilon} \in \R^d$ be such that
 $u(x_{\epsilon}) > \|u \|_0 - \epsilon$ and
  take a test function $\phi \in C^{\infty }_c (\R^d)$
   such that $\phi (x_{\epsilon}) =1$,
    $0 \le \phi \le 1$, and $\phi(x) =0$ if $|x-x_{\epsilon}| \ge
    1$. One checks that
 $
u_{\epsilon}(x) =  u(x) + 2 \epsilon \, \phi(x)
 $
verifies the assumptions.  Let us define the operator
  ${\cal L }_1  =  {\cal L}
 +  b \cdot D $
and write
 $$
\lambda u_{\epsilon}(x) -   {\cal L }_1 u_{\epsilon}(x)
 = g(x)
+ \lambda ( u_{\epsilon}(x) - u(x)) -
 {\cal L}_1 (u_{\epsilon} - u)(x).
$$  Let  $y_{\epsilon}$ be one point in which
 $u_{\epsilon}$ attains its global maximum.  Since clearly
 ${\cal L}_1 u_{\epsilon}(y_{\epsilon}) \le 0$, we have
 (using also \eqref{rit})
$$
 \lambda \| u_{\epsilon} \|_0  =
  \lambda u_{\epsilon}(y_{\epsilon}) \le
  \| g\|_0 + C \| u - u_{\epsilon} \|_{{1+ \gamma}}
   \le \| g\|_0 + C_1 \, \epsilon.
$$
Letting $\epsilon \to 0^+$, we get  \eqref{max1}.
 \end{proof}

 Next we prove  Schauder estimates for
  \eqref{resolv} when {\it $b$ is constant.} The case of $b
  \in C^{\beta}_b(\R^d, \R^d)$ will be treated in Theorem
  \ref{reg}.
  We stress  that the constant $c$ in \eqref{schaud}
   is independent of $b=k$.
   %This will be important
  %in the proof of Theorem \ref{reg} to deal with
  % the critical case  $\alpha =1$.

The  condition $\alpha + \beta >1$ which we impose
 is needed to  have a regular $C^1$-solution $u$.
  On the other hand,
  the next result holds more generally without the
  hypothesis $\alpha + \beta < 2 $;
  this is  imposed to simplify the proof
 and it is not restrictive in the study
  of
  pathwise  uniqueness for \eqref{SDE}.

\begin{theorem} \label{sta} Assume Hypothesis \ref{nondeg}.
 Let $\alpha \in (0,2)$ and
  $\beta  \in (0, 1)  $
 be such that $1 < \alpha + \beta < 2$.
   Then,  for any
 $\lambda>0$, $k \in \R^d$,   $g \in C^{\beta}_b (\R^d)$,
   there exists a unique   solution
  $u= u_{\lambda} \in C^{\alpha + \beta}_b(\R^d)$ to the equation
   \beq \label{we}
 \lambda u -  {\cal L} u -
  k \cdot Du   = g
 \eeq
 on $\R^d$ (${\cal L}$ is defined in \eqref{gener}$)$.
   In addition there exists a constant $c$ independent of
  $g$, $u$,  $k $ and $\lambda >0$ such that
\begin{equation} \label{schaud}
\lambda \| u\|_0 \, +
 \, \lambda^{\frac{\alpha + \beta - 1}{\alpha }}
 \| Du \|_{0} \, + \, [Du]_{\alpha + \beta
-1}\le c \| g \|_{{\beta}}.
\end{equation}
%Moreover we have
%\begin{equation}  \label{gh1}
%\| Du \|_{0 } \le C(\lambda)  \| g \|_{0},
%\;\; \text{ \rm with $C(\lambda) \to 0 $ as $\lambda \to + \infty$.}
%\end{equation}
\end{theorem}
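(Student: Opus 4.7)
The plan is to realize the solution as the resolvent of the Markov semigroup associated with the L\'evy process with constant drift $\tilde L_t := kt + L_t$. Setting
\[
P^k_t f(x) = \int_{\R^d} f(x + kt + y)\, p_t(y)\, dy, \qquad f \in C_b(\R^d),
\]
one obtains a translation-invariant Markov semigroup whose generator on smooth functions is $\mathcal L + k\cdot D$. The candidate solution is
\[
u_\lambda(x) = \int_0^\infty e^{-\lambda t}\, P^k_t g(x)\, dt.
\]
Uniqueness is immediate from Proposition \ref{max}, which at the same time delivers the bound $\lambda\|u\|_0 \le \|g\|_0$, i.e.\ the first piece of \eqref{schaud}.

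The derivative estimates come from transferring derivatives from $g$ onto $p_t$. By the preceding lemma, $p_t\in C^\infty$ with integrable spatial derivatives, so $\int Dp_t\,dy = 0$ and
\[
DP^k_t g(x) = -\int_{\R^d} \bigl[g(x+kt+y) - g(x+kt)\bigr]\, Dp_t(y)\, dy.
\]
Combining the $\beta$-H\"older regularity of $g$ with the scaling $p_t(y) = t^{-d/\alpha} p_1(t^{-1/\alpha}y)$ of property (a) yields
\[
\|DP^k_t g\|_0 \;\le\; C\,[g]_\beta\, t^{(\beta-1)/\alpha},
\]
and an analogous argument using $D^2 p_t$ gives
\[
\|D^2 P^k_t g\|_0 \;\le\; C\,[g]_\beta\, t^{(\beta-2)/\alpha}.
\]
Crucially, both constants are independent of $k$, since $k$ enters only as a shift in the argument of $g$.

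Integrating the first bound against $e^{-\lambda t}\,dt$ and changing variables produces the factor $\lambda^{-(\alpha+\beta-1)/\alpha}$ on the right-hand side, which is the second piece of \eqref{schaud}; here the hypothesis $\alpha+\beta>1$ is used for convergence at $t=0$. For the H\"older seminorm $[Du]_{\alpha+\beta-1}$, I combine the two pointwise bounds into
\[
|DP^k_t g(x) - DP^k_t g(x')| \;\le\; C\,[g]_\beta\, \min\bigl(t^{(\beta-1)/\alpha},\ |x-x'|\, t^{(\beta-2)/\alpha}\bigr),
\]
split the resolvent integral at the crossover $t_0 = |x-x'|^\alpha$, and sum; each piece contributes $|x-x'|^{\alpha+\beta-1}$ up to a constant independent of $\lambda$. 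The hypothesis $\alpha+\beta<2$ enters here and only here, precisely as what is needed for the tail integral $\int_{t_0}^\infty t^{(\beta-2)/\alpha}\,dt$ to converge.

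The main point I expect to require care is not the chain of inequalities, which is essentially a scaling exercise, but the verification that $u_\lambda$ actually lies in $C^{\alpha+\beta}_b(\R^d)$ so that $\mathcal L u_\lambda$ is classically defined (cf.\ the discussion after the preceding lemma on when $\mathcal L f$ is meaningful), and the pointwise validity of \eqref{we}. Both follow from standard semigroup/resolvent arguments once the above derivative bounds are in hand, and the whole scheme is tailor-made to yield Schauder constants independent of $k$ because every estimate depends on $g$ only through $[g]_\beta$ and otherwise only on $L^1$-norms of derivatives of $p_t$, none of which involve $k$.
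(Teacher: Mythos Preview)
Your overall strategy coincides with the paper's: realize $u_\lambda$ as the Laplace transform of the translated semigroup $P^k_t$, derive gradient bounds from the scaling of $p_t$, and obtain the H\"older seminorm of $Du$ by splitting the time integral at $t_0=|x-x'|^\alpha$. Uniqueness via Proposition~\ref{max} and the independence of all constants from $k$ are handled exactly as in the paper.

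There is one substantive technical difference. To reach $\|DP^k_t g\|_0 \le C\,t^{(\beta-1)/\alpha}$, you subtract $g(x+kt)$, use $\int Dp_t=0$, and bound by $[g]_\beta\int_{\R^d}|y|^\beta|Dp_t(y)|\,dy$. After scaling this equals $[g]_\beta\, t^{(\beta-1)/\alpha}\int_{\R^d}|y|^\beta|Dp_1(y)|\,dy$, so your argument tacitly requires the moment condition $\int |y|^\beta |Dp_1(y)|\,dy<\infty$. The lemma preceding the theorem only gives $Dp_1\in L^1$, not this weighted integrability; the latter is true (it follows from the tail asymptotics $|Dp_1(y)|\lesssim |y|^{-d-\alpha-1}$ for non-degenerate stable densities) but is not proved in the paper and would need a separate justification. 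The paper sidesteps this entirely: it first records the cruder bound $\|DP_t f\|_0\le c_0 t^{-1/\alpha}\|f\|_0$ (needing only $\|Dp_1\|_{L^1}<\infty$), pairs it with the trivial estimate $\|DP_t f\|_0\le\|Df\|_0$ for $f\in C^1_b$, and then interpolates via $(C_b,C^1_b)_{\beta,\infty}=C^\beta_b$ to obtain $\|DP_t f\|_0\le c_1 t^{-(1-\beta)/\alpha}\|f\|_\beta$. The same interpolation gives the $D^2$ bound. Your route is more direct and yields constants depending only on $[g]_\beta$ rather than $\|g\|_\beta$, but at the cost of the unproved moment bound; the paper's interpolation is slightly less sharp but self-contained.

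A second, minor point: you dismiss the verification that $u_\lambda$ actually solves \eqref{we} as ``standard semigroup/resolvent arguments''. The paper devotes a full Part~II to this, proceeding by approximation in three stages ($g\in C^2_0\Rightarrow g\in C^2_b\Rightarrow g\in C^\beta_b$) because the semigroup $(P^k_t)$ is not strongly continuous on $C_b$ and $\mathcal L$ is only defined pointwise on $C^{1+\gamma}_b$ with $1+\gamma>\alpha$. This is routine but not entirely trivial, and your sketch should at least indicate the approximation route.
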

 \begin{proof}
 Equation \eqref{we}
  is meaningful for $u \in C^{\alpha + \beta}_b (\R^d)$
with $\alpha + \beta>1$ thanks to \eqref{rit}.
 Moreover,  uniqueness follows from Proposition \ref{max}.

 To prove the result,
 we use the semigroup approach as in \cite{DL}.
  To this purpose, we  introduce  the
$\alpha$-stable Markov semigroup $(P_t)$ acting on $C_b(\R^d)$ and
associated to ${\cal L} + k \cdot Du$, i.e.,
$$
P_t f(x) = \int_{\R^d} f(z + tk)\,  p_t (z-x) dz, \;\; t>0, \; f \in C_b(\R^d),
 \; x \in \R^d,
$$
  where $p_t$ is defined in \eqref{de}, and
  $P_0 = I$.  Then we consider
 the bounded function $u = u_{\lambda}$,
 \begin{equation} \label{d}
 u(x) = \int_0^{\infty} e^{-\lambda t} P_t g(x) dt, \;\; x \in \R^d.
 \end{equation}
  We are going to show   that $u$ belongs to
    $C^{\alpha + \beta}_b (\R^d)$,  verifies
\eqref{schaud} and solves \eqref{we}.

\hh {\it I Part.} We prove that
$u \in C^{\alpha + \beta}_b (\R^d)$
and that \eqref{schaud} holds.

 First note that $\lambda \| u\|_0 \le \| g\|_0$
  since $(P_t)$ is a contraction semigroup.
 Then,
  using the scaling property
  $p_t (x) = t^{-d/\alpha} p_1 (t^{- 1/\alpha} x)$,
 we arrive at
\beq \label{se}
|DP_t f(x)| \le \frac{t^{-1/\alpha}}{t^{d/\alpha}}
\int_{\R^d} |f(z + t k)| \, | D p_1 (t^{-1/\alpha}z
 - t^{-1/\alpha}x )| \, dz \le \frac{c_0 \| f\|_0} {t^{1/\alpha}},
\eeq $t>0$, $f \in C_b (\R^d)$, where $c_0 = \| Dp_1
\|_{L^1(\R^d)},$ and so we find the  estimate
\begin{equation} \label{lar}
 \| DP_t f \|_0 \le \frac{c_0}{t^{1/ \alpha}} \| f \|_0,
  \;\; f \in C_b(\R^d),\; t>0.
 \end{equation}
  By interpolation theory we know that $
 \big ( C^{}_b (\R^d), C^{1}_b (\R^d) \big)_{\beta, \infty}
 = C^{\beta}_b (\R^d), $ $\beta \in (0,1)$,
 see for instance \cite[Chapter 1]{L};
   interpolating
 the previous estimate with the   estimate
 $\| DP_t f \|_0 \le  \| D f \|_0$, $t \ge 0,$
  $f \in C^1_b(\R^d)$, we  obtain
 \begin{equation} \label{est1}
 \| DP_t f \|_0 \le \frac{c_1}{t^{
  (1- \beta) / \alpha}}
\| f \|_{{\beta}}, \;\; t>0,\;\; f \in C^{\beta}_b(\R^d),
 \end{equation}
 with $c_1= c_1(\alpha, \beta)$. In a similar way, we  also find
\begin{equation} \label{est2}
 \| D^2 P_t f \|_0 \le \frac{c_2}{t^{
  (2- \beta) / \alpha}} \| f \|_{{\beta}}, \;\; t>0,\;\; f \in C^{\beta}_b(\R^d).
 \end{equation}
  Using \eqref{est1} and  the fact that
   $\frac{1- \beta}{\alpha} <1$, we can differentiate under the
   integral sign in \eqref{d} and
  prove
 that there exists
  $D u(x) = D u_{\lambda}(x)$, $x \in \R^d$.
  Moreover $D u_{\lambda}$
  is bounded on $\R^d$ and we have, for any $\lambda>0$ with
  $\tilde c$ independent of $\lambda$, $u$, $k$ and $g$,
  $$
\lambda^{\frac{\alpha + \beta - 1}{\alpha }}
 \| Du \|_{0} \le
 \tilde c \| g \|_{{\beta}}
  $$
   (we have used that
 $\int_0^{\infty} e^{-\lambda t} t^{-\sigma} dt $  $=
\frac{c}{\lambda^{1 -\sigma}}$, for $\sigma < 1$ and $\lambda>0$).

  It remains to prove that $Du \in C^{\theta}_b(\R^d, \R^d)$,
  where
  $\theta = \alpha - 1 + \beta  \in (0,1)$. We
   proceed as in the proof
   of \cite[Proposition 4.2]{bassJFA}
    and
  \cite[Theorem 4.2]{P}.

 Using \eqref{est1},  \eqref{est2}
    and the fact that $2 - \beta > \alpha$,
 we find, for any $x, x' \in \R^d$, $x \not = x',$
\begin{align*}
 |Du (x) - Du(x')| &\le C  \| g\|_{{\beta}}
   \Big ( \int_0^{|x-x'|^{\alpha}}
  \frac{1}{t^{
  (1- \beta) / \alpha}} dt +
   \int_{|x-x'|^{\alpha}}^{\infty}
 \frac{|x-x'|}{t^{
  (2- \beta) / \alpha}} dt
 \Big)
\\
& \le c_3  \| g\|_{{\beta}}
 {|x-x'|^{\theta}},
\end{align*}
 and so $[Du]_{ \alpha - 1 + \beta }
 \le c_3  \| g\|_{{\beta}}$,
  where $c_3 $ is independent of  $g$, $u$,
   $k $ and $\lambda$.

\vskip 1mm  \noindent  {\it II Part.}  We prove that $u$ solves
  \eqref{we}, for any $\lambda >0$.

 We use the fact that the semigroup $(P_t)$ is strongly continuous
on the Banach space $C_0 (\R^d) \subset C_b (\R^d)$ of all functions
vanishing at infinity (endowed with $\| \cdot \|_0$; see
 \cite[Section 6.7]{A} and \cite[Section 31]{sato}).
  Let
${\cal A}: D({\cal A}) \subset C_0 (\R^d) \to C_0 (\R^d)$ be its
generator. By \cite[Theorem 31.5]{sato})  $C^2_0 (\R^d) \subset
D({\cal A})$ and moreover  ${\cal A}f = {\cal L}f + k \cdot Df$ if
$f \in C^2_0 (\R^d)$ (we say that $f$ belongs to $C^2_0 (\R^d)
 \;$  if $f
\in  C^2_b (\R^d) \cap C_0(\R^d)$ and all its first and second
partial derivatives belong to $C_0 (\R^d)$).

 \noindent {\sl We first show  the assertion assuming in addition
 that $g \in C^2_0 (\R^d)$.}

  It is   easy to check that
 $u$  belongs to
 $C^2_0
(\R^d)$ as well.
 To this purpose, one can use the estimates $\| D^k
 P_t g \|_0 \le \| D^k g \|_0, $ $t \ge 0$, $k = 1, 2$, and the
dominated convergence theorem.
 By
  the Hille-Yosida theorem we know that $u \in D({\cal A})$ and $
 \lambda u - {\cal A} u = g $. Thus we have found that
  $u$ solves \eqref{we}.

 \hh {\sl Let us  prove the assertion when
    $g \in C^2_b (\R^d)$.}

  Note  that also $u \in C^2_b (\R^d)$.
  We  consider   a function $\psi \in
C_c^{\infty}(\R^d)$ such that
 $\psi (0)=1$ and
  introduce $g_n (x) = \psi(x/n) g(x)$,
  $x \in \R^d$, $n \ge 1$. It is clear that $g_n, \,
  u_n \in C^2_0(\R^d)$ ($u_n$ is given in \eqref{d} when $g$
  is replaced by $g_n$).
 We know that
 \beq \label{sol}
  \lambda u_n(x)- {\cal L}u_n(x) - k \cdot Du_n(x) = g_n(x),
   \;\; x \in \R^d.
\eeq
 It is easy to see that
  there exists $C>0$ such that $\| g_n\|_{2} \le C$, $n \ge
 1$, and moreover
  $g_n $ and $Dg_n$ converge pointwise  to $g$ and $Dg$
  respectively. It follows that also
  $\| u_n\|_{2} $ is uniformly bounded
  and moreover
  $u_n $ and $Du_n$ converge pointwise  to $u$ and $Du$
  respectively.
  Using also \eqref{rit},
  we can apply  the dominated convergence theorem and  deduce that
$$
 \lim_{n \to \infty} {\cal L}u_n(x) = {\cal L}u(x),\;\; x \in \R^d.
$$
Passing to the limit in \eqref{sol},
 we obtain that $u$
 is a solution to \eqref{we}.

\vskip 0.5mm \noindent {\sl Let now $g \in C^{\beta}_b (\R^d)$.}

  Take any $\phi \in C_c^{\infty}(\R^d)$ such that
  $0 \le \phi \le 1$ and $\int_{\R^d } \phi =1$. Define
  $\phi_n (x) = n^d\phi(xn)$ and $g_n = g * \phi_n$.
 Note that  $(g_n) \subset C^{\infty}_b(\R^d)
   = \cap_{k \ge 1} C_b^k(\R^d)$ and
 $\| g_n\|_{\beta} \le \| g\|_{\beta} $,
 $n \ge 1$. Moreover,
 possibly  passing to a subsequence
   still denoted by
   $(g_n)$,  we may assume that
  \beq \label{conv}
 g_n \to g  \;\; \text{in} \;\; C^{\beta'}(K).
 \eeq
 for any compact set $K \subset \R^d$ and $0 < \beta' < \beta$
 (see page 37 in \cite{Kr}).
 Let $u_n$ be  given in \eqref{d} when $g$
  is replaced by $g_n$. By the first part of the proof, we know
  that
 $$
 \|  u_n\|_{\alpha + \beta} \le C \| g_n \|_{\beta} \le
  C \| g\|_{\beta},
$$
 where $C$ is independent of $n$. It follows that, possibly
 passing to a subsequence  still denoted with $(u_n)$,
   we have that
  $
 u_n \to u $  in $  C^{\alpha + \beta'}(K),$
 for any compact set $K \subset \R^d$ and $\beta' >0$
  such that $1 < \alpha +
 \beta' < \alpha + \beta.$
  Arguing as before, we can
 pass to the limit in
  $ \lambda u_n(x)- {\cal L}u_n(x) - k
   \cdot Du_n(x)$ $ = g_n(x)$
 and obtain that $u$ solves \eqref{we}.
 The proof is complete.
\end{proof}

Now we extend Theorem \ref{sta}
 to the case in which $b$ is H\"older
 continuous. We can only do this
  when $\alpha \ge 1$ (see also
   Remark \ref{al}).
% Note that the case $\alpha >1$
% is  simpler than the critical case $\alpha =1$ (cf.
% also \cite[Section 7.3]{bassJFA}).
% Indeed, when $\alpha =1$, the operator  $\cal L $
%  (which is comparable with  $-(- \triangle)^{1/2})$
%  is ``of the same order as''
%    $ b\cdot D u $.
 To prove the result when $\alpha =1$ we  adapt the
localization procedure  which is well known
   for second order uniformly elliptic
operators with H\"older continuous coefficients (see
 \cite{Kr}). This technique works in our situation since
 in estimate \eqref{schaud} the constant is independent
 of $k \in
\R^d$.

 The next proof   requires
 the following  interpolatory
 inequalities
  (see \cite[page 40, (3.3.7)]{Kr});  for any $t \in
 [0,1)$,
 $0 \le s \le r <1 $, there exists $N = N(d,k, r,t)$
 such that if
 $f \in C_b ^{\, r+t} (\R^d, \R^k)$,
 then
 \beq \label{kry11}
 [f]_{s+t} \le N [f]_{r+t}^{s/r} \;\,  [f]_t^{1- \, s/r},
 \eeq
where $[f]_{s+t}$ is defined as in \eqref{hol}
if $0< s+t <1$, $[ f
]_0 = \| f \|_0$, $[f]_1 = \| Df\|_0$, and
 $[f]_{s+t} = [Df]_{s+t - 1}$ if $ 1 < s+t < 2$. By
 \eqref{kry11} we deduce, for any $\epsilon >0$,
\beq \label{kry22}
 [f]_{s+t} \le \tilde N  \epsilon^{r-s}
 [f]_{r+t} \; + \; \tilde N \epsilon^{-s} [f]_t,
 \;\;\; f \in C_b ^{\, r+t} (\R^d, \R^k).
 \eeq

\begin{theorem} \label{reg} Assume Hypothesis \ref{nondeg}.
  Let $\alpha \ge 1$  and
 $\beta  \in (0, 1)  $
 be such that $1<  \alpha + \beta < 2 $.
 Then,  for any
 $\lambda>0$, $g \in C^{\beta}_b (\R^d)$,
   there exists a unique  solution
  $u= u_{\lambda} \in C^{\alpha + \beta}_b
  (\R^d)$ to the equation
  \beq \label{wee}
 \lambda u -  {\cal L} u  - b \cdot Du = g
 \eeq
 on $\R^d$. Moreover, for any  $\omega >0$,
  there exists  $c = c(\omega) $, independent
  of  $g$ and  $u$,  such that
 \begin{equation} \label{sch4}
 \lambda \| u\|_0 + [Du]_{\alpha + \beta - 1}
  \le c \| g\|_{\beta}, \;\;  \lambda \ge \omega.
 \end{equation}
 Finally,  we have
 $\lim_{ \lambda \to \infty} \| Du_{\lambda} \|_{0} =0$.
\end{theorem}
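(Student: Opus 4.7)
Uniqueness follows directly from Proposition \ref{max}, so the work lies in constructing a classical $C^{\alpha+\beta}_b$-solution and proving the Schauder bound \eqref{sch4}. My plan is to establish the a priori estimate first and then obtain existence via the continuity method connecting $b\equiv 0$, where Theorem \ref{sta} applies, to the actual $b$. The proof splits according to $\alpha$: for $\alpha>1$ the drift $b\cdot D$ is strictly lower order than $\mathcal L$ and a perturbation argument suffices, whereas for $\alpha=1$ the two terms share the same order and one needs a freezing-of-coefficients procedure that crucially exploits the independence of the constant in \eqref{schaud} from the drift vector $k$.

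\emph{A priori estimate when $\alpha>1$.} Given a classical solution $u\in C^{\alpha+\beta}_b(\R^d)$ of \eqref{wee}, I rewrite it as $\lambda u-\mathcal{L}u=g+b\cdot Du$, note the right-hand side belongs to $C^\beta_b$, and apply Theorem \ref{sta} with $k=0$. This gives
\[
\lambda\|u\|_0+\lambda^{(\alpha+\beta-1)/\alpha}\|Du\|_0+[Du]_{\alpha+\beta-1}\le c\bigl(\|g\|_\beta+\|b\|_\beta(\|Du\|_0+[Du]_\beta)\bigr).
\]
Since $\alpha>1$ forces $\beta<\alpha+\beta-1$, the interpolation \eqref{kry22} applied to $Du$ yields $[Du]_\beta\le\varepsilon[Du]_{\alpha+\beta-1}+C(\varepsilon)\|Du\|_0$ for any $\varepsilon>0$. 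Choosing $\varepsilon$ small enough that $c\|b\|_\beta\varepsilon\le\tfrac12$ absorbs $[Du]_{\alpha+\beta-1}$ into the left; then taking $\omega$ so large that $\omega^{(\alpha+\beta-1)/\alpha}$ dominates the coefficient of the residual $\|Du\|_0$ completes \eqref{sch4} for $\lambda\ge\omega$. The extension from large $\omega$ to arbitrary $\omega>0$ is a routine bootstrap using the maximum principle \eqref{max1}.

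\emph{A priori estimate and main obstacle when $\alpha=1$.} Here $[Du]_\beta=[Du]_{\alpha+\beta-1}$, so the perturbation above fails and one must localize. I cover $\R^d$ by balls $B(x_i,\delta)$ of small radius $\delta$ with finite multiplicity, pick a subordinate partition of unity $(\eta_i)\subset C_c^\infty$, and freeze $b$ to $k_i=b(x_i)$ on $\mathrm{supp}\,\eta_i$. Setting $v_i=u\eta_i$ and computing,
\[
\lambda v_i-\mathcal{L}v_i-k_i\cdot Dv_i=\eta_i g+(k_i-b)\cdot Dv_i+u(b\cdot D\eta_i)-[\mathcal{L},\eta_i]u,
\]
where $[\mathcal{L},\eta_i]u=\mathcal{L}(u\eta_i)-\eta_i\mathcal{L}u$ is the nonlocal commutator. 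Applying Theorem \ref{sta}, whose constant is uniform in $k_i$, bounds $[Dv_i]_\beta$ by the $C^\beta_b$ norm of the right-hand side. The crucial point is $\|b-k_i\|_0\le[b]_\beta\delta^\beta$ on $\mathrm{supp}\,\eta_i$, so the freezing error produces a factor $C\delta^\beta$ multiplying $[Dv_i]_\beta$ that is absorbed on the left by taking $\delta$ small; the commutator and the $u(b\cdot D\eta_i)$ term are of strictly lower order and are handled by \eqref{kry22}. Summing over $i$ and dominating the remaining $\|u\|_0$ by \eqref{max1} yields \eqref{sch4} for $\lambda\ge\omega$. The main difficulty is the $C^\beta_b$-control of the nonlocal commutator $[\mathcal{L},\eta_i]u$: since $\mathcal{L}$ has order exactly $1$, this commutator is formally of order zero, and its Hölder bound requires careful use of the cancellation in the defining integral \eqref{gener} together with the regularity of $\eta_i$, so that only lower-order norms of $u$ appear on the right-hand side and can be absorbed via interpolation.

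\emph{Existence and decay at infinity.} Armed with the a priori estimate I run the continuity method along $b_\tau=\tau b$, $\tau\in[0,1]$: at $\tau=0$ solvability is furnished by Theorem \ref{sta}, and the a priori bound (with constants depending on $\|b\|_\beta$ but uniform in $\tau$) shows that the set of $\tau$ for which $\lambda I-\mathcal{L}-b_\tau\cdot D\colon C^{\alpha+\beta}_b\to C^\beta_b$ is onto is both open and closed in $[0,1]$, hence equals $[0,1]$. For the final assertion, the standard interpolation
\[
\|Du_\lambda\|_0\le N\,\|u_\lambda\|_0^{(\alpha+\beta-1)/(\alpha+\beta)}\,[Du_\lambda]_{\alpha+\beta-1}^{1/(\alpha+\beta)}+N\|u_\lambda\|_0,
\]
combined with $\lambda\|u_\lambda\|_0\le c\|g\|_\beta$ and the uniform bound on $[Du_\lambda]_{\alpha+\beta-1}$ from \eqref{sch4}, gives $\|Du_\lambda\|_0\to 0$ as $\lambda\to\infty$.
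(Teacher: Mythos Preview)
Your proposal is correct and follows essentially the same approach as the paper's proof: perturbation of Theorem \ref{sta} with $k=0$ for $\alpha>1$, freezing of coefficients and localization for $\alpha=1$ (exploiting precisely that the constant in \eqref{schaud} is independent of $k$), the continuity method for existence, and interpolation for the decay of $\|Du_\lambda\|_0$. The only differences are organizational---the paper localizes with a single cutoff at an arbitrary point $x_0$ and then takes the supremum over $x_0$ rather than summing over a partition of unity, and it absorbs the residual $\|Du\|_0$ term directly via the interpolation $\|Du\|_0\le \tilde N\varepsilon^{\alpha+\beta-1}[Du]_{\alpha+\beta-1}+\tilde N\varepsilon^{-1}\|u\|_0$ combined with the maximum principle, which yields \eqref{sch4} for every $\omega>0$ in one step and makes your separate bootstrap from large $\omega$ unnecessary.
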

 \begin{proof}
 Uniqueness and  estimate
 $\lambda \| u\|_0 \le \| g\|_0$, $\lambda >0,$
  follow from the maximum principle
 (see Proposition \ref{max}).
  Moreover, the last assertion
  follows from \eqref{sch4} using \eqref{kry11}.
   Indeed, with $t = 0$, $s =
  1 $, $r = \alpha + \beta $, we obtain,
  for $\lambda \ge \omega$,
 $$
[Du_{\lambda}]_{0} = [u_{\lambda}]_{1}   \le N
[Du_{\lambda}]_{\alpha + \beta -1}^{\frac{1}{\alpha + \beta}}
\;
  [u_{\lambda}]_{0}^{ 1 - \frac{1}{\alpha + \beta}}
 \le N \tilde c \,\, \lambda^{- {\frac{\alpha + \beta - 1
 }{\alpha +
\beta}}  } \; \| g\|_{\beta},
$$
where $\tilde c= \tilde c(\omega)$. Letting
 $\lambda \to \infty$, we get the assertion.

 Let us prove   existence and estimate $[Du]_{\alpha + \beta - 1}
  \le c \| g\|_{\beta},$ for $ \lambda \ge \omega$,
  with $\omega >0$
    fixed.
 We treat $\alpha >1$
  and $\alpha = 1$
 separately.

\vskip 1mm  \noindent  {\it I Part (the case $\alpha >1)$}.
 In the sequel we will use the  estimate
 \beq \label{pri}
 \|  l f \|_{\theta} \; \le \;
    \| l \|_{ 0}  \| f \|_{\theta }\, + \,
   \| f \|_{0 }  [ l ]_{\theta}, \;\;
       l, \,f \in { C}_{b}^{\theta}(\R^d),\;\;
        \theta \in (0,1).
\eeq Writing $ \lambda u(x) -
  {\cal L} u(x)
=   g(x) +  b(x)\cdot D u(x) $, and using \eqref{schaud}
 and \eqref{pri},
    we obtain
 the following  estimate (assuming
  that
$u \in C^{\alpha + \beta}_b (\R^d)$ is a solution  to
 \eqref{wee})
 \begin{align} \label{stima}
  [ D u ]_{\alpha + \beta -1} & \le C \| g \|_{\beta}  +
   C \| b \cdot D u \|_{\beta}
   \\ \nonumber & \le
    C \| g \|_{\beta}  + C \| b \|_{\beta}  \| Du \|_{0}
    + C \| b \|_{0}
    [ Du ]_{\beta},
 \end{align}
 where $C$ is independent of $\lambda>0$.
  Combining the interpolatory  estimates (see \eqref{kry22}
 with $t=0$, $s = 1 + \beta$, $r = \alpha + \beta$)
$$ [ Du  ]_{\beta}
 \le
 \tilde N \epsilon^{\alpha -1} [ Du ]_{{\alpha + \beta -1}}
 + \tilde N \epsilon^{- (1 +\beta) }\| u \|_{0},
 \;\; \epsilon >0,
 $$
 and $ \|  Du  \|_{0}
 \le
 \tilde N \epsilon^{\alpha + \beta -1}
 [ Du ]_{{\alpha + \beta -1}}
 + \tilde N \epsilon^{- 1  }\| u \|_{0}$
 (recall that $\alpha + \beta  > 1+ \beta$)
  with  the maximum principle,
 we get for $\epsilon$ small enough the a-priori estimate
\begin{align} \label{stima1}
 \lambda \| u\|_0  +
 [ Du ]_{\alpha + \beta -1} &\le c_1 (\| g \|_{\beta} +
  C(\epsilon) \| u\|_0)
  \\ \nonumber &\le c_1 \big(\| g \|_{\beta} +
  \frac{C(\epsilon)}{\omega} \| g\|_0 \big) \le
    C_1 \|g \|_{\beta},
 \end{align}
 for any  $\lambda \ge \omega$.
 Now to prove  the existence of
   a $C^{\alpha + \beta}_b$-solution,
  we  use the
   continuity method
 (see, for instance, \cite[Section 4.3]{Kr}).
 Let us introduce
 \begin{equation} \label{drr}
  \lambda u(x) -  {\cal L} u(x)  -
    \delta b(x)\cdot D u(x) = g(x),
 \end{equation}
$x \in \mathbb{R}^d,$ where $\delta \in [0,1]$ is a parameter. Let
 us define $ \Gamma = \{  \delta  \in [0,1] \, :\,  $  there is a
   unique solution $u =u_{\delta}
  \in C^{\alpha + \beta}_b(\mathbb{R}^d)$,
    for any $
   g \in C^{\beta}_b(\mathbb{R}^d)\}. $

Clearly $\Gamma$ is not empty since $0 \in \Gamma. $
 Fix $\delta_0 \in \Gamma$ and rewrite \eqref{drr}
  as
$$
\lambda u(x) -  {\cal L} u(x)  -
    \delta_0 b(x)\cdot D u(x) = g(x)
    + (\delta - \delta_0) b(x) \cdot Du(x).
$$
Introduce the operator $S : C^{\alpha + \beta}_b (\mathbb{R}^d)
 \to
C^{\alpha + \beta}_b(\mathbb{R}^d).
 $ For any $v \in  C^{\alpha + \beta}_b(\mathbb{R}^d)$,
  $u = S v$ is the unique $C^{\alpha + \beta}_b $-solution to
   $\lambda u(x) -  {\cal L} u(x)$ $  -
    \delta_0 b(x)\cdot D u(x) = g(x)$ $
    + (\delta - \delta_0) b(x) \cdot Dv(x).$

  By using   a-priori estimate \eqref{stima1},
 we find that $ \| Sv_1 - Sv_2 \|_{\alpha + \beta}
 \le $ $\displaystyle {2|\delta - \delta_0|}$
 $ \cdot \, \tilde c_1 \, \|b \|_{\beta}
 \| v_1 - v_2 \|_{\alpha + \beta}$. By choosing $|\delta
  - \delta_0|$ small enough, $S$ becomes a contraction and it has
  a unique fixed point which is the solution to \eqref{drr}.
   A compactness argument shows that $\Gamma = [0,1]$.
 The assertion is proved.

\vskip 1mm \noindent  {\it II Part (the case $\alpha =1)$}.
 As before, we establish
    the existence of a $C^{1+ \beta}_b (\R^d)$-solution,
   by using the
  continuity method. This requires
   an a-priori
  estimate \eqref{stima1} for $\alpha =1$.

 Let $u \in C^{1+ \beta}_b (\R^d)$ be a solution.
  Let $r>0$. Consider a function $\xi \in C_c^{\infty}(\R^d)$
 such that $\xi (x) = 1$ if $|x| \le r$ and
$\xi (x) = 0$ if $|x| > 2r$.

 Let now $x_0 \in \R^d$ and define $\rho (x) = \xi (x-x_0)$,
 $x \in \R^d$, and  $v = u \rho$.
 One can easily check that
 \begin{equation} \label{vai}
 {\cal L} v (x) = \rho(x)
 {\cal L} u(x) +
u(x) {\cal L} \rho (x)
\end{equation}
$$
+ \int_{\R^d} (\rho (x+ y) - \rho(x))
 (u(x+y) - u(x)) \, \nu(dy), \;\; x \in \R^d.
$$
We have
$$
\lambda v(x) -
  {\cal L} v(x) -
   b(x_0) \cdot Dv(x) =
f_1(x)+f_2(x)+f_3(x) + f_4(x), \;\; x \in \R^d,
$$ where
\[ f_1(x)=\rho(x)g(x), \qquad f_2(x)=
  (b(x)-
b(x_0)) \cdot Dv(x), \]
\[ f_3(x)=- u (x) [{\cal L} \rho (x) +
b(x) \cdot D \rho(x)],
 $$$$\;\; f_4 (x) = - \int_{\R^d} (\rho (x+ y) - \rho(x))
 (u(x+y) - u(x)) \, \nu(dy),\;\; x \in \R^d.
\]
 By Theorem \ref{sta} we know that
\begin{equation}\label{6.6}
 [ D v ]_{ \beta }
 \leq C_1(\|f_1\|_{\beta} + \|f_2\|_{\beta} +
\|f_3\|_{\beta} + \|f_4\|_{\beta}),
\end{equation}
 where the constant $C_1$ is independent of $x_0$ and
  $\lambda$.
 Let us consider the crucial  term $f_2$.
  By \eqref{pri}
 we find
$$
 \|  f_2 \|_{\beta } \; \le \;  \big( \sup_{x \in B(x_0, 2r)}
 |b(x) - b(x_0)| \big)\, [ Dv ]_{\beta} +
  \| Dv \|_0 \| b\|_{\beta}.
$$
Let us fix  $r$  small enough such that
 $C_1  \sup_{x \in B(x_0, 2r)}
 |b(x) - b(x_0)| < 1/2$.  We get
 \begin{equation} \label{d4}
 [ D v ]_{ \beta }
 \leq 2 C_1(\|f_1\|_{\beta} +  \| Dv \|_0 \| b\|_{\beta}  +
\|f_3\|_{\beta} + \|f_4\|_{\beta}).
 \end{equation}
 Note that $\| f_1\|_{\beta}   $
  $\le C(r) \, \| g\|_{\beta} $.
 Using  again the  interpolatory
  estimates  \eqref{kry22}
 together with the maximum principle, we
  arrive at
$$
 [ Dv ]_{ \beta }
 \leq  C_2 ( \|g\|_{\beta} + \| f_3\|_{\beta} +
  \| f_4\|_{\beta}),
$$
 for any $\lambda \ge \omega$.
 Let us estimate  $f_4$.
  To this
  purpose we introduce the following non-local
  linear operator $T$
 $$
 T f(x) = \int_{\R^d} (\rho (x+ y) - \rho(x))
 (f(x+y) - f(x)) \, \nu(dy), \;\, f \in C^1_b (\R^d),
  \;  x \in \R^d.
$$
 One can easily check  that
 $T$ is continuous from  $ C^1_b(\R^d)$
  into  $C_b(\R^d) $ and from $C^{1+ \beta}_b(\R^d)$
  into $  C_b^1(\R^d) $. To this purpose
  we only remark that, for any $x \in \R^d$,
$$
|DTf(x)| \le 5 \, \| \rho\|_{2} \| f\|_{1} \big(
 \int_{\{ |y| \le 1 \}}|y|^2 \nu(dy) +
  \int_{\{ |y|>1 \} } \nu(dy)\big)
$$
$$
+  5\, \| \rho\|_1 \| f\|_{1+ \beta} \big(
 \int_{\{|y| \le 1 \} }|y|^{1+ \beta} \nu(dy) +
  \int_{\{ |y|>1 \}}
  \nu(dy)\big), \;\; f \in C^{1+ \beta}_b (\R^d).
$$
 By interpolation theory we know that
$$
 \Big ( C^{1}_b (\R^d), C^{1+ \beta}_b (\R^d)
  \Big)_{\beta, \infty}
 = C^{1 + \beta^2}_b (\R^d),
$$
  see
 \cite[Chapter 1]{L}, and so  we get that
 $
 T$ is continuous from  $ C^{1+ \beta^2}_b(\R^d)
 $ into  $ C_b^{\beta}(\R^d)$ (see
 \cite[Theorem 1.1.6]{L}). Since $f_4 = - Tu$,
 we obtain
  the estimate
$$
 \| f_4\|_{\beta} \le C_3 \| u \|_{{1+ \beta^2}}.
$$
 We have $ \| f_4\|_{\beta} +
  \| f_3\|_{\beta} \le c_{3}(r) \, \| u \|_{{1+ \beta^2}}$
   and so
$$
 [ Dv ]_{ \beta }
 \leq  C_4 ( \|g\|_{\beta} +
 \| u \|_{{1+ \beta^2}}).
$$
 It follows that $ [ D u ]_{ C^{\beta}(B(x_0, r))
 }
 \leq  C_4 ( \|g\|_{\beta} +
  \| u \|_{{1+ \beta^2}})$,
  where $B(x_0, r)$ is the ball of center $x_0$
   and radius $r>0$.
 Since $C_4$ is
  independent of $x_0$,  we obtain
$$
 [D u ]_{ \beta }
 \leq  C_4 ( \|g\|_{\beta} +
 \| u \|_{{1+ \beta^2}}),
$$
 for any $\lambda \ge \omega$.
 Using again  \eqref{kry22} and the maximum pinciple,
  we get the
 a-priori estimate \eqref{stima1} for $\alpha =1.$
 Applying the continuity method we obtain the assertion.
 The proof is complete.
\end{proof}

\begin{remark} \label{al}
 {\em In contrast  with Theorem \ref{sta},
 in Theorem \ref{reg} we can not show existence of
 $C^{\alpha + \beta}_b$-solutions
  to \eqref{wee} when $\alpha <1$.
  The difficulty   is evident from
 the a-priori estimate \eqref{stima}.
  Indeed, starting from
  $$
 [ D u ]_{\alpha + \beta -1} \le
    C \| g \|_{\beta}  + C \| b \|_{\beta}  \| Du \|_{0}
    + C \| b \|_{0}
    [ Du ]_{\beta},
$$
we cannot continue, since $\alpha <1$ gives   $Du \in
 C^{\theta}_b$
 with $\theta =  \alpha + \beta -1 < \beta $.
  Roughly speaking,  when $\alpha <1$, the perturbation term
  $b \cdot Du $ is  of order larger than ${\cal L}$
 and  so we cannot prove  the desired
  a-priori estimates.
 }
\end{remark}

\section{The main result}

 We briefly    recall  basic facts about
 Poisson random measures which we use in the sequel (see also
\cite{A}, \cite{Ku}, \cite{protter}, \cite{Zab}).
 The Poisson random measure $N$ associated with
the process $L = (L_t)$ in \eqref{SDE} is defined by
$$
N((0,t] \times U) = \sum_{0 < s \le t} 1_{U} (\triangle L_s) =
 \sharp \{ 0< s \le t \; : \; \triangle L_s \in U\},
$$
 for any Borel set $U$ in $\R^d \setminus \{ 0 \}$, i.e.,
  $U \in {\cal B}(\R^d \setminus \{ 0 \})$,
   $t>0$. Here
 $\triangle L _s = L_s - L_{s-}$ denotes the jump size of $L$
 at time $s > 0.$ The compensated Poisson random measure $\tilde N$ is defined by
 $
 \tilde N ( (0,t] \times U ) =
  N((0,t] \times U) - t \nu (U),
$
 where $\nu $ is given in \eqref{spec}.
  Recall that
  L\'evy-It\^o decomposition of the process $L $
  (see \cite[Theorem 2.4.16]{A} or
 \cite[Theorem 2.7]{Ku}).  This says that
 \begin{equation} \label{ito}
 L_t = \hat b \, t +
 \int_0^t \int_{\{ |x| \le 1\} } x \tilde N(ds, dx) +
 \int_0^t \int_{\{ |x| > 1 \} } x  N(ds, dx), \;\; t \ge 0,
 \end{equation}
where $\hat b= E [ L_1 - \int_0^1 \int_{\{ |x| > 1 \} }
  x  N(ds, dx)]$. Note that in our case, since $\nu$ is symmetric,
  we have $\hat b =0$.

 The stochastic integral
   $\int_0^t \int_{\{ |x| \le 1 \} } x \tilde N(ds, dx)$
 (which is the compensated sum of small jumps)   is an
  $L^2$-martingale.
 The process $ \int_0^t \int_{\{ |x|> 1\} } x N(ds, dx) $
 $= \int_{(0,t]}
\int_{\{ |x|> 1\} } x  N(ds, dx)$ $  { =  \sum_{0 < s \le t,
  \; |\triangle L_s| > 1}
  \triangle L _s}
  $ is   a compound Poisson
process.

 Let $T>0$.
 The predictable $\sigma$-field ${\cal P}$
 on $\Omega \times [0,T]$ is generated by all  left-continuous adapted processes (defined on the same stochastic basis fixed in Section 2).
 Let $U \in {\cal B}(\R^d \setminus \{ 0\}).$ In the sequel, we will always consider   a ${\cal P} \times {\cal B}(U)$-measurable mapping  $F :
 [0, T] \times U \times \Omega \to \R^d$.

If $0 \not \in \bar {U},$ then  $\int_0^T \int_{U} F(s,x)  N(ds, dx) $ $ = \sum_{0 < s \le T
  \; }
  F(s, \triangle L _s) 1_U (\triangle L _s)$ as a random finite sum.

If $E \int_0^T ds \int_U |F(s, x)|^2 \nu (dx) < \infty$, then one can
define
  the stochastic integral
 $$
 Z_t =  \int_0^t \int_{U} F(s,x)  \tilde N(ds, dx), \;\; t \in [0,T]
 $$
 (here we do not assume $0 \not \in \bar {U}$).
  The process
$Z = (Z_t)$ is an $L^2$-martingale with a c\`adl\`ag modification.
Moreover, $E |Z_t|^2$
 $= E\int_0^t ds \int_{U} |F(s,x)|^2  \nu(dx)$
 (see \cite[Lemma 2.4]{Ku}). We will also use the following
 $L^p$-estimates
 (see \cite[Theorem  2.11]{Ku}
 or the proof of  Proposition 6.6.2 in \cite{A});
  for any $p \ge 2$,
 there exists $c(p) >0$ such that
 $$
E [\sup_{0 < s \le t} |Z_s|^p] \le c(p)
   E \Big [ \big(\int_0^t ds \int_{U}
   |F(s,x)|^2  \nu(dx) \big)^{p/2}
 \Big] $$
 \beq \label{kun}+ \, c(p)  E \Big [\int_0^t ds \int_{U}
 |F(s,x)|^p  \nu(dx)
 \Big],\;\; t \in [0,T]
\eeq
 (the inequality is obvious if the right hand side is infinite).

\vskip 1mm  Let us recall the concept of {\it $($strong$)$ solution} which we consider.  A solution
 to the SDE \eqref{SDE} is a c\`adl\`ag ${\cal F}_t$-adapted  process $X^x=
(X_t^x)$ (defined on  $(\Omega, {\cal F}, ({\cal F}_t)_{t \ge 0},
P)$ fixed in Section 2) which solves \eqref{SDE} $P$-a.s., for
 $t
\ge 0$.

\vskip 0.5 mm
 It is easy to show the existence of a solution to
\eqref{SDE}
 using the fact that $b$ is bounded and continuous. We may argue
 at $\omega$ fixed.
 Let us first consider $t \in [0,1]$.
  By introducing $v(t) = X_t - L_t$, we get
the equation
 $$
 v(t) = x + \int_0^t b(v(s) + L_s) ds.
 $$
Approximating $b$ with smooth drifts $b_n$
 we find solutions $v_n \in C([0,1]; \R^d)$.
 By  the Ascoli-Arzela theorem, we  obtain
  a solution to
 \eqref{SDE} on $[0,1]$. The same argument works also on the time
 interval
  $[1,2]$  with a random initial condition. Iterating this
   procedure
   we can
  construct a solution for all $t \ge 0$.

%\vskip 1mm
The proof of Theorem \ref{uno} requires  some lemmas.
We begin with a deterministic result.

\begin{lemma} \label{dis} Let $\gamma \in [0, 1]$
 and  $f \in C^{1+ \gamma}_b(\R^d)$.
  Then for any $u, v \in \R^d$, $x \in \R^d,$ with  $
  |x| \le 1$,   we have
  $$
 | f(u + x) - f(u) - f(v+ x) + f(v) | \le c_{\gamma}
 \| f\|_{{1+ \gamma}}
 \, |u- v| \, |x|^{\gamma}, \;\; \; \text{with} \; c_{\gamma}
 = 3^{1- \gamma} 2^{\gamma}.
$$
\end{lemma}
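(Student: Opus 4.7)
My plan is to express the mixed second difference as a single integral via the fundamental theorem of calculus and then bound it using the H\"older regularity of $Df$. Rewriting the left hand side as $[f(u+x) - f(v+x)] - [f(u) - f(v)]$ and integrating along the segment joining $v$ to $u$ yields the key identity
$$
f(u+x) - f(u) - f(v+x) + f(v) \,=\, \int_0^1 \bigl[\, Df(v + x + t(u-v)) - Df(v + t(u-v)) \,\bigr] \cdot (u-v) \, dt.
$$
This reduces the problem to controlling the single first-derivative increment $|Df(y+x) - Df(y)|$ uniformly in $y \in \R^d$, which is exactly what a $C^{1+\gamma}_b$ norm does in terms of $|x|$.

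For the main range $\gamma \in (0,1)$ the H\"older seminorm gives $|Df(y+x) - Df(y)| \le [Df]_\gamma |x|^\gamma$, and inserting this bound under the integral sign and pulling out $|u-v|$ immediately produces
$$
|f(u+x) - f(u) - f(v+x) + f(v)| \,\le\, [Df]_\gamma \, |u-v| \, |x|^\gamma \,\le\, \|f\|_{1+\gamma}\, |u-v|\, |x|^\gamma,
$$
which is the claimed inequality (already with constant $1 \le c_\gamma$). The endpoint cases $\gamma=0$ and $\gamma=1$ are handled analogously using $|Df(y+x) - Df(y)| \le 2\|Df\|_0$ and $|Df(y+x) - Df(y)| \le \|D^2 f\|_0 |x|$ respectively. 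To recover the precise constant $c_\gamma = 3^{1-\gamma} 2^\gamma$ displayed in the statement, one interpolates log-convexly between these two endpoint bounds, absorbing any residual factor of $|x|^{1-\gamma}$ into $1$ via the hypothesis $|x| \le 1$.

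The main obstacle is essentially nil. The one care point is to choose the direction of the fundamental theorem of calculus correctly, namely along $u-v$ rather than along $x$: integrating first in the $x$-direction would instead give $|Df(u+rx) - Df(v+rx)| \le [Df]_\gamma |u-v|^\gamma$ and produce a bound of the shape $|u-v|^\gamma |x|$, which is not what is needed later when the estimate is paired with the L\'evy measure $\nu(dx)$ concentrated near the origin in the It\^o--Tanaka argument of Section~4.
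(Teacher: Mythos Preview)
Your argument is correct and takes a genuinely different, more elementary route than the paper. The paper views $T_x f(u) := f(u+x)-f(u)$ as a linear operator, establishes the endpoint bounds $\|T_x\|_{C^1_b \to C^1_b} \le 2+|x|$ and $\|T_x\|_{C^2_b \to C^1_b} \le 2|x|$, and then invokes real interpolation $(C^1_b, C^2_b)_{\gamma,\infty} = C^{1+\gamma}_b$ to obtain $\|T_x f\|_1 \le (2+|x|)^{1-\gamma}(2|x|)^\gamma \|f\|_{1+\gamma}$; this is precisely where the specific constant $3^{1-\gamma}2^\gamma$ comes from. You instead write the mixed second difference as a single integral of $Df$-increments via the fundamental theorem of calculus and apply the $\gamma$-H\"older bound on $Df$ directly. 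Your route is shorter, avoids any appeal to interpolation theory, and in fact yields a sharper constant ($1$ in place of $c_\gamma \ge 2$) for $\gamma \in (0,1)$; the paper's interpolation viewpoint, on the other hand, is more in line with the operator-theoretic arguments used elsewhere in Section~3 (e.g., for the non-local operator $T$ in the proof of Theorem~\ref{reg}). Your closing remark about log-convex interpolation to recover the exact $c_\gamma$ is unnecessary for the lemma as stated, since you already have the inequality with a smaller constant, but it correctly identifies how the paper's displayed constant arises.
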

 \begin{proof}
 For any $x \in \R^d$, $|x| \le 1$,
  define the linear operator
   $T_x : C^1_b (\R^d) \to C^1_b (\R^d)$,
 $$
 T_x f (u) = f(u + x) - f(u),\;\;\; f \in C^1_b (\R^d),\;
  u \in \R^d.
$$
 Since $\| T_x f\|_{0}
 \le \| Df \|_{0} |x| $ and $ \| D ( T_x f )\|_{0}
 \le 2 \| Df \|_{0}$,
 it follows  that $T_x$ is continuous and
  $\| T_x f\|_{1}$ $ \le (2 + |x|) \,  \|f \|_1,$
   $f \in C^1_b (\R^d)$.
 Similarly,
  $T_x$ is  continuous
  from $ C^{2}_b (\R^d)$ into $ C^1_b (\R^d)$ and
$$
\| T_x f\|_{1} \le 2  |x| \,  \|f \|_2, \;\; \; f \in
 C^2_b
(\R^d).
$$
By interpolation theory $
 \Big ( C^{1}_b (\R^d), C^{2}_b (\R^d) \Big)_{\gamma, \infty}
 = C^{1 + \gamma}_b (\R^d),
 $
 see for instance \cite[Chapter 1]{L}; we deduce that, for any
$\gamma \in [0,1]$,
 $T_x$ is continuous from $
 C^{1 + \gamma}_b (\R^d)$ into  $ C^1_b(\R^d)$ (cf.
 \cite[Theorem 1.1.6]{L})
  with operator norm less than or equal to
 $
 (2 + |x| )^{1- \gamma} \; (2 \, |x|)^{\gamma}.
 $

 Since $|x| \le 1$, we obtain that $ \| T_x  f\|_{1} \le
  c_{\gamma} \, |x|^{\gamma} \,  \| f\|_{1+ \gamma}$, $f
   \in C^{1+ \gamma}_b (\R^d)$.
   Now the assertion follows
 noting that, for any $u, v \in \R^d$,
$$
|f(u + x)  - f(u) - f(v+ x) + f(v)| = |
 T_x f(u) - T_x f(v)|
 \le   \| D T_x f \|_0 \, |u-v|.
$$
The proof is complete.
\end{proof}

In the sequel we will consider the following resolvent equation on
$\R^d$
\begin{equation} \label{resolv1}
 \lambda u -
 {\cal L} u  - Du \cdot b = b,
\end{equation}
 where $b : \R^d \to \R^d$ is given in \eqref{SDE},
 the operator ${\cal L}$ in \eqref{gener}
 and
 $\lambda>0$ (the equation must be understood componentwise, i.e.,
   $\lambda u_i -   $
 ${\cal L} u_i  - b \cdot Du_i = b_i$, $i =1, \ldots, d)$.
 The next two results hold
 for SDEs of type \eqref{SDE} when
 $b$ is only continuous and bounded.

 \begin{lemma} \label{due1} Let $\alpha \in (0,2)$
 and $b \in C_b (\R^d, \R^d)$ in  \eqref{SDE}.
 Assume
 that, for some $\lambda >0$, there exists a solution
 $u \in C^{1+ \gamma}_b (\R^d, \R^d)$ to
 \eqref{resolv1} with $\gamma \in [0,1]$, and moreover
 $$
 1+ \gamma > \alpha.
$$
 Let $X = (X_t)$  be a solution of
 \eqref{SDE} starting at $x \in \R^d$.   We have, $P$-a.s.,
 $t \ge 0$,
\beq \label{itok}
 u(X_t) - u(x)
\eeq
$$
=  x- X_t + L_t  + \lambda  \int_0^t u(X_s) ds
 + \int_0^t \int_{\R^d \setminus \{0 \} } [ u(X_{s-} + x) - u(X_{s-})]
   \tilde N(ds, dx).
$$
\end{lemma}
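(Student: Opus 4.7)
The overall plan is to apply an It\^o formula for jump semimartingales to $u(X_t)$ and then simplify the resulting identity by means of the resolvent equation \eqref{resolv1} and the SDE \eqref{SDE}. Since $u$ is only assumed to lie in $C^{1+\gamma}_b$, the standard $C^2$-version of It\^o does not apply directly, and the first step is a regularization. I would fix a standard mollifier $(\phi_n)$ and set $u_n = u * \phi_n \in C^{\infty}_b(\R^d, \R^d)$, so that $u_n$ and $D u_n$ converge uniformly on $\R^d$ to $u$ and $D u$, and $\| u_n\|_{1+\gamma} \le \| u\|_{1+\gamma}$ for every $n$.

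For each smooth $u_n$ the usual It\^o formula for jump semimartingales (applied componentwise, see e.g.\ \cite[Theorem 4.4.7]{A}) together with the L\'evy--It\^o decomposition \eqref{ito} (note $\hat b = 0$ by symmetry of $\nu$) yields, after combining the small-jump and large-jump contributions and recasting the jump correction into a compensated integral plus the $\nu$-integral that assembles $\int_0^t \mathcal L u_n(X_s)\,ds$, the identity
\[
 u_n(X_t) - u_n(x) = \int_0^t \bigl[ D u_n(X_s)\cdot b(X_s) + {\cal L}u_n(X_s)\bigr]\,ds + \int_0^t \int_{\R^d\setminus\{0\}} [u_n(X_{s-}+y) - u_n(X_{s-})]\,\tilde N(ds, dy).
\]
I would then pass to the limit $n \to \infty$. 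The left side converges pointwise; in the Lebesgue integral, $Du_n \cdot b \to Du \cdot b$ uniformly on $\R^d$, while $\mathcal L u_n(X_s) \to \mathcal L u(X_s)$ pointwise and is uniformly bounded by $\| u\|_{1+\gamma}\int_{\{|y|\le 1\}}|y|^{1+\gamma}\nu(dy) + 2\|u\|_0\, \nu(\{|y| > 1\})$ in view of \eqref{rit}, so dominated convergence applies. For the compensated stochastic integral I would use the $L^2$-isometry together with dominated convergence, bounding the integrand $|u_n(X_{s-}+y) - u_n(X_{s-}) - u(X_{s-}+y) + u(X_{s-})|^2$ by $4\|Du\|_0^2|y|^2$ on $\{|y|\le 1\}$ and by $16\|u\|_0^2$ on $\{|y|>1\}$ (both $\nu$-integrable), obtaining $L^2$-convergence and hence $P$-a.s.\ convergence along a subsequence.

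Once the limiting identity is established for $u$, the resolvent equation \eqref{resolv1} gives $D u(X_s)\cdot b(X_s) + \mathcal L u(X_s) = \lambda u(X_s) - b(X_s)$ componentwise, and the SDE \eqref{SDE} yields $-\int_0^t b(X_s)\,ds = x - X_t + L_t$. Substituting both converts the identity into precisely \eqref{itok}. The main obstacle is the regularity gap handled via mollification, specifically controlling $\mathcal L u_n$ uniformly in $n$ and passing to the limit: it is here that the hypothesis $1 + \gamma > \alpha$ intervenes essentially, since via \eqref{tr} it supplies $\int_{\{|y|\le 1\}} |y|^{1+\gamma}\, \nu(dy) < \infty$, which is exactly what makes \eqref{rit} usable inside dominated convergence.
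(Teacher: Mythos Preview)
Your proposal is correct and follows essentially the same approach as the paper: mollify $u$ to $u_n\in C^\infty_b$ with $\|u_n\|_{1+\gamma}\le\|u\|_{1+\gamma}$, apply the classical It\^o formula to $u_n(X_t)$, pass to the limit using dominated convergence for the $\nu$-integral (via \eqref{rit} and $1+\gamma>\alpha$) and the $L^2$-isometry for the compensated stochastic integral, and then substitute the resolvent equation \eqref{resolv1} and the SDE \eqref{SDE}. The paper carries this out for a generic $f\in C^{1+\gamma}_b$ and phrases the convergence as $f_n\to f$ in $C^{1+\gamma'}(K)$ on compacts, but since $Du$ is globally H\"older your uniform-convergence statement is in fact valid and the argument is the same in substance.
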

 \begin{proof} First note that the stochastic integral in
 \eqref{itok} is meaningful thanks to the estimate
  \beq \label{mava}
   E  \int_0^t ds \int_{\R^d \setminus \{0 \} } | u(X_{s-} + x) - u(X_{s-})|^2  \nu(dx)
   \eeq
 $$
  \le 4 t\| u\|_0   \int_ { \{ |x| >1 \}}  \nu(dx)
   + 2 t\| u\|_{1}    \int_{ \{ |x| \le 1 \} }
   | x |^{2 } \nu(dx) < \infty.
   $$
   The  assertion is obtained applying   It\^o's formula to $u(X_t)$
  (for more details on It\^o's formula
   see  \cite[Theorem 4.4.7]{A}
   and \cite[Section 2.3]{Ku}).

  A difficulty is that It\^o's formula
   is usually stated for
   smooth functions $f \in C^2 (\R^d)$.
   However, in the
  present situation in which $L$  is a symmetric
   $\alpha$-stable process, using \eqref{tr}, one can show
  that It\^o's formula holds for
  any  $f  \in C^{1+ \gamma}_b(\R^d)$. We
   give a  proof   of
  this fact.

  Let $f  \in C^{1+ \gamma}_b(\R^d) $.
 We assume that $\gamma >0$ (the proof with $\gamma =0$ is similar).
   By mollifying $f$ as in \eqref{conv} we obtain a
  sequence $(f_n) \subset C^{\infty}_b(\R^d)$ such that
  $
 f_n \to f  \;\; \text{in}$ $ \;\; C^{1+ \gamma'}(K),
 $
  for any compact set $K \subset \R^d$ and $0 < \gamma' < \gamma$. Moreover,  $\| f_n\|_{1 + \gamma} \le \| f\|_{1+ \gamma} $,
 $n \ge 1$ .
 Let us fix $t>0$. By It\^o's  formula
  we find, $P$-a.s.,
\begin{align*}
& f_n(X_t) - f_n(x)
\\
& = \int_0^t \int_{ \R^d \setminus \{ 0\}} [ f_n(X_{s-} + x) -
f_n(X_{s-})]
   \, \tilde N(ds, dx)
\\ &
  + \int_0^t ds\int_{\R^d} [ f_n(X_{s-} + x) - f_n(X_{s-})
   - 1_{ \{  |x| \le 1 \} } \, x \cdot D f_n (X_{s-}) ]
   \nu (dx)
\end{align*}
 \begin{equation} \label{ci}
  + \int_0^t b(X_s) \cdot Df_n(X_s) ds.
 \end{equation}
 It is not difficult to pass to the limit as $n \to \infty$;
  we
 show two  arguments which are needed.
  To deal with the integral
 involving $\nu$, one can apply the dominated convergence theorem, thanks to the following estimate similar to \eqref{rit},
$$
 | f_n(X_{s-} + x) - f_n(X_{s-})
   -  \, x \cdot D f_n (X_{s-}) |
 \le c \| Df\|_{\gamma} |x|^{1+  \gamma}, \;\;
 |x| \le 1
 $$
(recall that  $\int_{ \{ |x| \le 1\} }  |x|^{1+  \gamma} \nu (dx) <
\infty$  since $1+ \gamma > \alpha$). In order to pass to the limit
in the stochastic integral
 with respect to $\tilde N$, one  uses the isometry formula
  \beq \label{dtt}
E \Big|  \int_0^t \int_{\R^d \setminus \{ 0\}} [ f_n(X_{s-} + x)
 -f(X_{s-} + x)  -
f_n(X_{s-}) + f(X_{s-})]
   \tilde N(ds, dx) \Big|^2
\eeq
$$
= \int_0^t ds \int_{\{ |x|\le 1 \} } E | f_n(X_{s-} + x)
 -f(X_{s-} + x)  -
f_n(X_{s-}) + f(X_{s-})|^2
   \nu (dx)
$$
$$
+ \int_0^t ds \int_{\{ |x| >  1 \} } E | f_n(X_{s-} + x)
 -f(X_{s-} + x)  -
f_n(X_{s-}) + f(X_{s-})|^2
   \nu (dx).
 $$
 Arguing as in \eqref{mava},
 since $\| f_n\|_{1+ \gamma} \le \| f\|_{1+ \gamma} $, $n \ge 1$, we
 can apply the dominated convergence theorem in \eqref{dtt}.
 Letting $n \to \infty$ in \eqref{dtt}  we
  obtain 0.
  Finally, we can pass to the limit in probability in
   \eqref{ci}
  and obtain  It\^o's formula  when
 $f \in C^{1+ \gamma}_b(\R^d)$.

 Let now $u \in  C^{1+ \gamma}_b(\R^d, \R^d)$ as in the theorem.
 Noting that, for any $i= 1, \ldots, d$,
$$
 {\cal L} u_i(y) =
  \int_{\R^d} [ u_i (y + x)
-  u_i (y)
   - 1_{ \{  |x| \le 1 \} } \, x \cdot D u_i (y) ]
   \nu (dx),\;\; y \in \R^d,
$$
 and using that $u$ solves \eqref{resolv1}, i.e.,
  ${\cal L}u + b \cdot Du = \lambda u - b$, we can replace
  in the It\^o formula  for $u(X_t)$ the   term
$$
\int_0^t  {\cal L} u(X_s)  ds  +
 \int_0^t Du(X_s) b(X_s)  ds
$$$$ = \sum_{i=1}^d\big( \int_0^t  {\cal L} u_i(X_s)  ds \,  +
 \, \int_0^t Du_i(X_s) \cdot b(X_s)  ds \big) e_i
$$
 with
$ - \int_0^t b(X_s) ds + \lambda  \int_0^t u(X_s) ds
 = x- X_t + L_t + \lambda  \int_0^t u(X_s) ds
$
and obtain the assertion.
\end{proof}

 The proof of  Theorem \ref{uno}
 will be a consequence of the following result.

\begin{theorem}
\label{uno1} Let $\alpha \in (0,2)$ and
  $b \in C_b (\R^d, \R^d)$ in  \eqref{SDE}.
 Assume
 that, for some $\lambda >0$,  there exists a solution
 $u= u_{\lambda} \in C^{1+ \gamma}_b (\R^d, \R^d)$
 to the resolvent equation
 \eqref{resolv1} with $\gamma \in [0,1]$, such that
   $c_{\lambda} =
    \| Du_{\lambda} \|_{0}  < 1/3 $.
 Moreover, assume that
 $$
 2 \gamma > \alpha.
$$
Then the SDE (\ref{SDE}), for every
$x\in\mathbb{R}^{d}$, has a unique  solution $(X_{t}^{x})$.

 Moreover,  assertions (i), (ii) and (iii)
   of Theorem \ref{uno} hold.
 \end{theorem}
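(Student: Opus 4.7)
The plan is to carry out an It\^o--Tanaka trick using the resolvent solution $u = u_\lambda$. Since $\alpha \in (0,2)$ and $2\gamma > \alpha$, we have $1+\gamma > 1 + \alpha/2 > \alpha$, so Lemma \ref{due1} applies to $u$. Setting $\phi(y) := y + u(y)$ and rearranging \eqref{itok} yields
\beq \label{trans}
 \phi(X_t) \, = \, \phi(x) + L_t + \lambda \int_0^t u(X_s)\,ds + M_t,
\eeq
where $M_t := \int_0^t \int_{\R^d \setminus \{0\}} [u(X_{s-}+z) - u(X_{s-})]\, \tilde N(ds,dz)$. The assumption $c_\lambda = \|Du_\lambda\|_0 < 1/3$ makes $\phi$ a bi-Lipschitz homeomorphism of $\R^d$ with $(1-c_\lambda)|x-y|\le |\phi(x)-\phi(y)|\le (1+c_\lambda)|x-y|$; this is the crucial structural consequence of choosing $\lambda$ large enough (using the last assertion of Theorem \ref{reg}, though here built directly into the hypothesis).

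Let $X^x$ and $X^y$ be two solutions on the same stochastic basis driven by the same $L$. Subtracting \eqref{trans} for $X^x$ and $X^y$ gives
$$ \phi(X^x_t) - \phi(X^y_t) = \phi(x) - \phi(y) + \lambda \int_0^t [u(X^x_s) - u(X^y_s)]\,ds + \tilde M_t, $$
with $\tilde M_t := \int_0^t \int [u(X^x_{s-}+z) - u(X^x_{s-}) - u(X^y_{s-}+z) + u(X^y_{s-})]\, \tilde N(ds,dz)$. To estimate $\tilde M_t$ one applies the jump $L^p$-inequality \eqref{kun}. By Lemma \ref{dis} the integrand is bounded by $c_\gamma \|u\|_{1+\gamma}|X^x_{s-} - X^y_{s-}|\,|z|^\gamma$ for $|z|\le 1$, and by $2c_\lambda |X^x_{s-} - X^y_{s-}|$ for $|z|>1$. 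The decisive observation is that $\int_{|z|\le 1}|z|^{2\gamma}\nu(dz)<\infty$ is equivalent to $2\gamma>\alpha$; for $p\ge 2$ the inequality $|z|^{p\gamma}\le |z|^{2\gamma}$ on $\{|z|\le 1\}$ then controls also the $p$-th moment integral. Combined with the trivial Lipschitz bound on the $\lambda u$ term and the bi-Lipschitz property of $\phi$, Gr\"onwall applied to $t\mapsto E[\sup_{s\le t}|\phi(X^x_s)-\phi(X^y_s)|^p]$ produces \eqref{ciao}. Pathwise uniqueness is the special case $x=y$.

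For (ii) and (iii), set $Y_t = \phi(X_t)$. From \eqref{trans} the process $Y$ solves the SDE
$$ Y_t = \phi(x) + L_t + \lambda\int_0^t u(\phi^{-1}(Y_s))\,ds + \int_0^t\int [u(\phi^{-1}(Y_{s-})+z) - u(\phi^{-1}(Y_{s-}))]\,\tilde N(ds,dz), $$
whose coefficients are globally Lipschitz (indeed $C^1$) since $u\in C^{1+\gamma}_b$ and $\phi^{-1}$ is a Lipschitz $C^1$-diffeomorphism. The general flow theorems in \cite{Ku} then furnish the homeomorphism and $C^1$-diffeomorphism properties of $x\mapsto Y^x_t$; transporting back through $X^x = \phi^{-1}(Y^x)$ yields assertions (ii) and (iii).

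The main obstacle will be the estimate of $\tilde M_t$: closing the small-jump bound requires precisely $2\gamma>\alpha$ (to ensure $|z|^{2\gamma}$ is $\nu$-integrable near the origin), and one must set up the $L^p$-inequality \eqref{kun} with sufficient care that the two terms on its right-hand side can both be controlled by $E\int_0^t|X^x_s-X^y_s|^p\,ds$ so that Gr\"onwall closes the loop. The auxiliary map $\phi = \mathrm{id} + u$ is the device that replaces the H\"older drift by Lipschitz terms, and the bi-Lipschitz property $\|Du\|_0<1/3$ is what allows us to pass between $X^x$ and $\phi(X^x)$ in both the uniqueness argument and the flow regularity step.
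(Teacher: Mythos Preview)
Your approach is essentially the paper's: the same It\^o--Tanaka transformation via $\phi=\mathrm{id}+u$, the same use of Lemma~\ref{dis} together with the $L^p$-estimate \eqref{kun} for the small-jump part (where $2\gamma>\alpha$ enters), and the same reduction of (ii)--(iii) to Kunita's flow theorems for the transformed SDE. The paper works with $X_t-Y_t$ directly rather than with $\phi(X_t)-\phi(Y_t)$, but via the bi-Lipschitz property of $\phi$ this is the same computation.

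The one place where your sketch is materially lighter than the paper is the last paragraph. Saying the coefficients of the $Y$-equation are ``globally Lipschitz (indeed $C^1$)'' does not by itself license the theorems in \cite{Ku}. For the homeomorphism property (Kunita's Theorem~3.10) one must check that for each fixed $z\neq 0$ the map $y\mapsto y+g(y,z)$, with $g(y,z)=z+u(\phi^{-1}(y)+z)-u(\phi^{-1}(y))$, is itself a homeomorphism of $\R^d$; the paper does this via Hadamard, computing $\|D_y g(\cdot,z)\|_0\le 2c_\lambda/(1-c_\lambda)<1$, which is exactly where $\|Du\|_0<1/3$ (rather than merely $<1$) is used a second time. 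For the $C^1$-property (Kunita's Theorem~3.4) one needs his conditions (3.8)--(3.9), i.e.\ H\"older continuity of $D_y h(\cdot,z)$ with $z$-weights $K_1,K_2$ that lie in $L^p(\nu)$ for all $p\ge 2$; since $Du$ is only $C^\gamma$, closing this requires an interpolation step (choose $\gamma'\in(0,\gamma)$ with $2\gamma'>\alpha$ and show $[T_z(Du)]_{\gamma-\gamma'}\le C|z|^{\gamma'}[Du]_\gamma$). These verifications are not hard but they are not automatic from ``Lipschitz, $C^1$'', and the paper carries them out in detail.
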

 \begin{proof} Note that $2 \gamma > \alpha$ implies
 the condition $1 + \gamma > \alpha$ of Lemma \ref{due1}.

  We  provide a direct proof of
   {\sl pathwise uniqueness  and assertion (i).}
  This uses Lemmas \ref{due1} and \ref{dis}
  together with $L^p$-estimates
  for stochastic integrals (see \eqref{kun}).
  Statements (ii) and (iii)
    will be obtained by transforming
     \eqref{SDE} in a form suitable
     for applying the results
  in   \cite[Chapter 3]{Ku}.

 Let us fix $t>0$, $p \ge 2$
 and
 consider two solutions
  $X $  and $Y$  of
 \eqref{SDE} starting at $x$ and $y \in \R^d$ respectively. Note
   that $X_t$ is not in $L^p$ if $p \ge \alpha$
   (compare with \cite[Theorem 3.2]{Ku}) but the difference $X_t - Y_t$ is a bounded process.
  Pathwise uniqueness and \eqref{ciao} (for any $p \ge 1$) follow if we prove
 \beq \label{ciao2}
E[ \sup_{0 \le s \le t} |X_s - Y_s|^p] \le C(t) \,
 |x-y|^p,\;\;\; x,\, y \in \R^d,
  \eeq
 with a  positive constant $C(t)$  independent of $x$ and $y$.
  Indeed in the special case of $x=y$ estimate \eqref{ciao2} gives
 uniqueness of solutions.

 We have from Lemma \ref{due1}, $P$-a.s.,
 \begin{equation} \label{ser}
 X_t - Y_t = [x-y] + [u(x) - u(y)] + [u(Y_t)- u(X_t)]
 \end{equation}
$$
   +
 \int_0^t \int_{\R^d \setminus \{ 0\}} [ u(X_{s-} + x) - u(X_{s-})
  - u(Y_{s-} + x) + u(Y_{s-})]
   \tilde N(ds, dx)
   $$
$$
 + \lambda  \int_0^t   [u(X_s)- u(Y_s)] ds.
$$
Since
 $\| Du\|_0 \le 1/3 $, we have    $
 |u(X_t)- u(Y_t)| $ $\le \frac{1}{3} |X_t  - Y_t|.
$ It follows the estimate
 $
 | X_t - Y_t| $ $\le \frac{3}{2} \Lambda_1(t)$ $ +
 \frac{3}{2}\Lambda_2(t)$ $ + \frac{3}{2}\Lambda_3(t)
 + \frac{3}{2}\Lambda_4,$ where
$$
 \Lambda_1(t) =  \Big|
 \int_0^t \int_{ \{ |x| > 1\} } [ u(X_{s-} + x) - u(X_{s-})
  - u(Y_{s-} + x) + u(Y_{s-})]
   \tilde N(ds, dx) \Big|,
$$
$$
\Lambda_2(t) =  \lambda  \int_0^t   |u(X_s)- u(Y_s)| ds,
$$
$$
\Lambda_3(t) =  \Big| \int_0^t \int_{\{ |x| \le  1\} } [ u(X_{s-} +
x) - u(X_{s-})
 - u(Y_{s-} + x) + u(Y_{s-}) ]
   \tilde N(ds, dx) \Big|,
$$
$ \Lambda_4 = |x-y| + |u(x) - u(y)| \le \frac{4}{3}|x-y| $. Note
that, $P$-a.s.,
$$
\sup_{0 \le s \le t } |X_s - Y_s|^p \le C_p |x-y|^p +
 C_p\sum_{k=1}^3 \sup_{0 \le s \le t } \, \Lambda_k(s)^p.
$$
The main difficulty is to estimate $\Lambda_3(t)$. Let us first
consider the other terms. By the H\"older inequality
$$
\sup_{0 \le s \le t } \Lambda_2(s)^p \le c_1 (p) \, t^{p-1} \int_0^t
\sup_{0 \le s \le r } |X_s - Y_s|^p \, dr.
%\;\; t \in [0,T].
$$
By \eqref{kun} with $U = \{x \in \R^d \; : \;  |x| >1\}$
 we find
$$
E[\sup_{0 \le s \le t } \Lambda_1(s)^p] $$$$ \le c(p)
 E \Big [ \Big ( \int_0^t ds \int_{ \{ |x| > 1 \} } | u(X_{s-} + x)
  - u(Y_{s-} + x) + u(Y_{s-})- u(X_{s-}) |^2 \nu(dx) \Big )^{p/2}
  \Big ]
$$
$$+ \, c(p)
 E  \int_0^t ds \int_{\{ |x| > 1\} } | u(X_{s-} + x)
  - u(Y_{s-} + x) + u(Y_{s-})- u(X_{s-})|^p \nu(dx).
$$
Using $ | u(X_{s-} + x)
  - u(Y_{s-} + x) + u(Y_{s-})- u(X_{s-}) |
  %2
  % \| Du\|_0 |X_{s-}
  %-  Y_{s-}|
  \le \frac{2}{3} |X_{s-}
  -  Y_{s-}|
  $ and the H\"older inequality, we get
$$
E[\sup_{0 \le s \le t } \Lambda_1(s)^p]
 \le C_1(p) \, (1+ t^{p/2
-1}) \, \cdot \, $$$$ \cdot \, \Big(
 \int_{\{ |x|>1 \} } \nu(dx) \,  +  \big(\int_{\{ |x|>1\} }
\nu(dx) \big)^{p/2} \Big)
 \int_0^t E[
\sup_{0 \le s \le r } |X_s - Y_s|^p ] dr.
$$
Let us treat  $\Lambda_3(t)$. This requires
 the condition $2 \gamma >
\alpha$. By using \eqref{kun} with
 $U = \{x \in \R^d \; : \;  |x| \le 1,\; x \not =0\}$
  and also Lemma \ref{dis}, we get
$$
E[\sup_{0 \le s \le t } \Lambda_3(s)^p]
 \le c(p) \| u\|_{1+ \gamma}^p
 \, E \Big [
 \Big ( \int_0^t ds \int_{\{  |x| \le  1 \} } |X_{s} - Y_s|^2
 |x|^{2\gamma}
 \nu(dx) \Big )^{p/2} \Big ]
$$
$$
 + \, c(p) \| u\|_{1+ \gamma}^p
 \, E  \int_0^t ds \int_{\{  |x| \le  1 \} } |X_{s} - Y_s|^p
 |x|^{\gamma p}
 \nu(dx).
$$
We obtain
$$
E[\sup_{0 \le s \le t } \Lambda_3(s)^p]\le C_2 (p) \, (1+ t^{p/2
-1}) \,
 \| u\|^p_{1+ \gamma} \, \cdot
$$$$ \cdot \, \Big(
 \big( \int_{\{ |x|\le 1 \} } |x|^{2 \gamma} \nu(dx) \big)^{p/2} +
  \int_{\{ |x|\le 1 \} } |x|^{\gamma p}
\nu(dx) \Big)
  \, \int_0^t E[\sup_{0 \le s \le r }
|X_s - Y_s|^p] \, dr,
$$
where $\int_{\{ |x| \le  1 \} }  \, |x|^{p \gamma}
   \nu(dx) < +\infty  $,
 since $p \ge 2$ and $2 \gamma > \alpha$.
 Collecting the previous estimates, we arrive at
$$
E[ \sup_{0 \le s \le t} |X_s - Y_s|^p] \le C_p \,
 |x-y|^p \,+\, C_4 (p) \, (1+ t^{p -1})
 \, \int_0^t E[\sup_{0 \le
s \le r }  |X_s - Y_s|^p] \, dr.
$$
Applying the
  Gronwall lemma we obtain \eqref{ciao2}
  with $C(t) = C_p \exp \big( C_4 (p) \, (1+ t^{p -1})\big)$.
  The assertion is proved.

\vskip 1mm  Now we establish the {\sl  homeomorphism
  property (ii)}
 (cf.  \cite[Chapter 3]{Ku}, \cite[Chapter 6]{A} and
 \cite[Section V.10]{protter}).

  First note that,
  since $\|Du \|_0 <1/3 $,   the classical
  Hadamard
  theorem (see \cite[page 330]{protter}) implies that
 the mapping
  $
  \psi : \R^d \to \R^d$, $\psi (x) = x + u(x)$, $x \in \R^d$,
   is a
  $C^1$-diffeomorphism from $\R^d$
  onto $\R^d$. Moreover,
   $D \psi^{-1}$ is bounded on $\R^d$ and
   $\|D \psi_{}^{-1} \|_0
 \le  \frac{1}{1 - c_{\lambda}} < \frac{3}{2}$ thanks to
\beq \label{stim3} D \psi_{}^{-1}(y)  =  [I + Du_{}( \psi_{}^{-1}
(y))]^{-1} = \sum_{k \ge 0} (- Du_{}( \psi_{}^{-1} (y)))^k,
 \; y \in \mathbb{R} ^d.
 \eeq
 Let $r \in (0,1)$  and  introduce  the  SDE
 \begin{align} \label{itt}
 & Y_t =  y + \int_0^t \tilde b ( Y_s) ds
 \\
 & \nonumber
  \int_0^t \int_{ \{ |z| \le r \} } g(Y_{s-}, z)
 \tilde N(ds, dz) +
 \int_0^t \int_{ \{ |z| > r \} } g(Y_{s-}, z)
 N(ds, dz),\; t \ge 0,
\end{align}
 where $\tilde b(y) = \lambda u( \psi^{-1} (y)) -
 \int_{ \{ |z| > r \} } [ u(\psi^{-1} (y) \,  + z)  - u(\psi^{-1} (y))]
   \nu(dz) $ and
 $$
 g(y,z) =  u(\psi^{-1} (y) \,  + z) + z - u(\psi^{-1} (y)),
 \;\;\; y \in \R^d,\; z \in \R^d \setminus \{ 0\}.
  $$
  Note that \eqref{itt}  is a SDE  of the type considered
   in
  \cite[Section 3.5]{Ku}.
  Due to the Lipschitz condition, there exists a
  unique solution $Y^y = (Y_t^y)$ to \eqref{itt}. Moreover,
   using \eqref{itok} and \eqref{ito} with $\hat b =0$,
    it is not difficult to show that
  \beq \label{spero}
 \psi (X_t^x) = Y_t^{\psi (x)}, \;\; x \in \R^d,\; t \ge 0.
\eeq
 Thanks to \eqref{spero} to prove our   assertion,
  it is enough to show
  the homeomorphism property for  $Y_t^y$.
 To this purpose, we will
 apply \cite[Theorem 3.10]{Ku} to equation \eqref{itt}.
 Let us check its assumptions.

   Clearly,   $\tilde b$
  is   Lipschitz continuous and bounded.
  Let us consider \cite[condition (3.22)]{Ku}.
   For any $y\in \R^d$, $z \in \R^d \setminus \{0\}$, $
 |g(y,z)| \le $ $|z| (1+  \| u\|_1)
  $ $\le K(z)$, where $K(z) = \frac{4}{3}|z|
$ (recall that $ \int_{|z| \le 1} |z|^2 \nu (dz)
  < \infty)$; further by Lemma \ref{dis} and
  \eqref{stim3} we have
$$
|g(y,z) - g(y',z)| \le L(z) |y- y'|,
 \;\;  y, \, y' \in \R^d, \;\; \text{where}
 \; L(z)
= C_1
 \| u\|_{{1+ \gamma}} |z|^{\gamma},
$$
 $|z|\le 1,$ with
  $\int_{|z| \le 1} L(z)^{2} \nu (dz)
  < \infty$,
   since $2 \gamma > \alpha$.
  Note that we may fix $r>0$
  small enough in \eqref{itt}
  in order  that
  $K(r) + L(r)< 1$ (according to
   \cite[Section 3.5]{Ku} this condition  allows to deduce
    that equation  \eqref{itt}
  without $\int_0^t \int_{ \{ |z| > r \} } g(Y_{s-}, z)
 N(ds, dz)$ satisfies the homeomorphism property).

  In order to get the homeomorphism
  property, it remains to check that, for any $z \in \R^d \setminus \{0\}$, the mapping:
\beq \label{hom}
 y \mapsto y + g(y,z) \;\; \text{is a
  homeomorphism from} \; \R^d \; \text{onto} \;
  \R^d.
\eeq
 Let us fix $z$. To check the assertion, we will again apply
  the Hadamard theorem. We have
 $$
 D_y g(y,z) =
 [Du(\psi^{-1} (y) \,  + z)  - Du(\psi^{-1} (y))] \,
  [D \psi^{-1}
(y)]
 $$
 and so by \eqref{stim3} (since $\| Du\|_0 < 1/3$)
 we get $
  \|D_y g( \cdot ,z) \|_0 \le \frac{2 c_{\lambda}}{
  1 - c_{\lambda}} < 1
 $.  We have obtained \eqref{hom}.
 By \cite[Theorem 3.10]{Ku}   the
  homeomorphism property for $Y_t^y$ follows and this
   gives  the assertion.

\vskip 1mm  Now we show that, for any $t \ge 0$,
 the {\sl mapping: $x
 \mapsto X_t^x$ is of class
 $C^1$ on $\R^d$,}  $P$-a.s. (see (iii)).
% {\sl continuously
% differentiable} on $\R^d$,
% $P$-a.s.. This gives (iii).

 We fix $t>0$
 and a unitary vector  $e_k$
 of the canonical basis in $\R^d$.
 % Let $X^x$ be the solution starting at $x$.
% According to  \cite[Section 3.3]{Ku}
 We will  show  that there exists,
 $P$-a.s.,
 the partial derivative $
 \lim_{\lambda \to 0} \frac{X_t^{x+ \lambda e_k}
 - X_t^{x}} {\lambda} $ $ = D_{e_k} X^x_t
 $ and, moreover, that the mapping
  $x\mapsto D_{e_k} X^x_t$ is continuous
 on $\R^d$, $P$-a.s..

  Let us consider the process
 $Y^y= (Y_t^y)$ which solves the SDE
 \eqref{itt}.
  If we prove that
 the mapping
 $y \mapsto Y_t^y$ is of class $C^1$
  on $\R^d$, $P$-a.s.,  then we have   proved the assertion.
 Indeed, $P$-a.s.
 $$ D_{e_k} X^x_t = [ D\psi^{-1} (Y_t^{\psi(x)}) ]
  [DY_t^{\psi(x)}] \, D_{e_k} \psi(x),\;\; x \in \R^d.
 $$
 In order to apply \cite[Theorem 3.4]{Ku}
  we introduce  the process
  $(Z_t^y)$ which solves
   \beq \label{itt2}
 Z_t =  y + \lambda  \int_0^t u( \psi^{-1} (Z_s)) ds
 +
 \int_0^t \int_{  \R^d \setminus \{ 0\}  }
 h(Z_{s-}, z)
 \tilde N(ds, dz),
\eeq $t \ge 0, \; y \in \R^d,$ where
 $$
 h(y,z) =
 u(\psi^{-1} (y) \,  + z)  - u(\psi^{-1} (y))
  = g(y,z) - z
  $$
 (adding $L_t$ to \eqref{itt2},
  one gets \eqref{itt}).
    Proving that   $y \mapsto Z_t^y$
   is of class $C^1$ on $\R^d$,  $P$-a.s.,
 is equivalent to show that $y \mapsto Y_t^y$ is of class $C^1$
 on $\R^d$.
    Indeed, we have
 $
 \lim_{\lambda \to 0} \frac{Y_t^{y+ \lambda e_k}
 - Y_t^{y}} {\lambda}
 $ $ =  \lim_{\lambda \to 0} \frac{Z_t^{y+ \lambda e_k}
 - Z_t^{y}} {\lambda}.
$

 To prove the assertion for $Z_t^y$, it is enough to
 check that the SDE \eqref{itt2} verifies the assumptions
 of \cite[Theorem 3.4]{Ku}.
 These are, respectively,
  \cite[conditions (3.1), (3.2), (3.8) and (3.9)]{Ku}.
 Conditions (3.1) and (3.2) are easy to check. Indeed
 $\lambda u
(\psi^{-1} (\cdot))$ is Lipschitz continuous on $\R^d$ and,
 moreover, thanks to Lemma \ref{dis} and to the boundeness of
 $D \psi^{-1}$,
$$
|h(y, z) - h(y', z)| \le C ( 1_{\{ |z|\le 1 \}} |z|^{\gamma} +
 1_{\{ |z|> 1 \}}) \; |y-y'|, \;\; z \in \R^d \setminus \{ 0\},
$$
 $y, y'  \in \R^d$,
   with $\int_{\R^d } ( 1_{\{ |z|\le 1 \}}
 |z|^{\gamma} +
 1_{\{ |z|> 1 \}})^p \, \nu (dz) < \infty$, for any $p \ge 2$.
 In addition,
$|h(y, z) | \le L(z)$, $z \in \R^d \setminus \{ 0\},
 $
 $y \in \R^d$, where, since $\| Du\|_0 < 1/3,$
$$
L (z) =   \frac{1}{3} \,  1_{\{ |z|\le 1 \}} |z|^{} +
 2 \| u\|_0 1_{\{ |z|> 1 \}}\;\;\; \text{with}\;\;
\int_{\R^d } L(z)^p \nu (dz) < \infty, \;\; p \ge 2.
$$
 Assumptions \cite[(3.8) and (3.9)]{Ku} are more difficult
  to check.
 They require that there exists some $\delta >0$ such that
  (setting $l(x) =  \lambda u
(\psi^{-1} (x))$)
 %\beq\label{ku2}
$$
(1) \; \sup_{y \in \R^d} | D l (y))| < \infty;
 \;\; |Dl(y) - Dl(y') |
  \le C |y-y'|^{\delta},\;\; y,\, y' \in \R^d.
 $$
\beq \label{ch} (2) \; | D_y h (y,z))| \le K_1(z);
 \;\; |D_y h(y,z) - D_y h(y',z) | \le K_2(z) \,
 |y-y'|^{\delta},  \;
\eeq for any  $y,\, y' \in \R^d,$
 $ z \in \R^d \setminus \{ 0\},$
with
 $\int_{\R^d  } K_i(z)^p  \, \nu (dz) < \infty$,
 for any $p \ge 2$, $i=1,2$. Such estimates are used in
 \cite{Ku} in combination with the
  Kolmogorov continuity theorem
 to show the differentiability property.

 Let us check (1) with $\delta = \gamma$, i.e.,  $Dl \in
   C_b^{\gamma} (\R^d, \R^d)$.
Since,  for any $y \in \R^d$,
 $
D  l (y) = \lambda Du ( \psi^{-1} (y))
 D\psi^{-1} (y)
$, we find that $Dl $ is bounded on $\R^d$.
 Moreover, thanks to the following
 estimate (cf. \eqref{pri})
$$
 [Dl]_{\gamma} \le \lambda \| Du \|_0 [D \psi^{-1}]_{\gamma}
 \, +
 \, \lambda [ Du ]_{\gamma} \| D \psi^{-1}\|_{0}^{1+ \gamma},
$$
 in order  to prove the assertion it is enough to show that
  $[D \psi^{-1}]_{\gamma} < \infty$.
 Recall that for  $d \times d $
 real matrices $A $ and $ B$,   we have
 $(I + A)^{-1} - (I + B)^{-1} $ $=
  (I + A)^{-1} ( B -A ) (I + B)^{-1}$
 (if $(I + A)$ and $(I + B)$ are invertible).
 We obtain, using also  that  $D \psi_{}^{-1}$ is bounded,
$$
|D \psi^{-1}(y) - D \psi^{-1}(y')| =
 |[I + Du_{}(
\psi_{}^{-1} (y))]^{-1} - [I + Du_{}( \psi_{}^{-1} (y'))]^{-1}
 |
$$
$$
\le c_1  \, [ Du ]_{\gamma} \,
 |y- y'|^{\gamma}, \;\; y, \, y' \in
\R^d
$$
 and the proof of (1) is complete
  with $\gamma = \delta$.
   Let us consider (2). Clearly,
 $ D_y h(y,z) = [Du(\psi^{-1} (y) \,  + z)  -
 Du(\psi^{-1} (y))]  D \psi^{-1}(y)$ verifies the first part of
 (2) with
$$
 K_1 (z) = c_2 \| Du\|_{\gamma} \,
  ( 1_{\{ |z|\le 1 \}} |z|^{\gamma} +
 1_{\{ |z|> 1 \}}).
$$
Let us deal with  the second part of (2). We choose
  $\gamma' \in (0, \gamma)$ such that $2 \gamma'
  > \alpha$ and
   first show that,
  for any $f \in C^{\gamma}_b (\R^d, \R^d)$, we have
\beq \label{inter}
 [T_x f]_{\gamma - \gamma'} \le C [f]_{\gamma} \,
 |x|^{\gamma'}, \;\;
  x \in \R^d,
\eeq
 where (as in Lemma \ref{dis}) for any $x \in \R^d$,
  we define
   the mapping $T_x f: \R^d \to \R^d$ as
  $T_x f (u) = f(x+ u ) - f(u)$, $u \in \R^d$.
  Using also \eqref{kry11} we get
$$
[T_x f]_{\gamma - \gamma'} \le N [T_x f]_{\gamma}^{\frac{\gamma -
\gamma'}{\gamma}} \, [T_x f]_{0}^{1 - \, \frac{\gamma -
\gamma'}{\gamma}}  \le c N [f]_{\gamma} \, |x|^{\gamma (1 - \,
\frac{\gamma - \gamma'}{\gamma})} \le cN |x|^{\gamma'} [f]_{\gamma},
$$
 for any $x \in \R^d$. By \eqref{inter} we will prove (2)
 with $\delta =
 \gamma - \gamma' >0$.

 First consider
 the case when $|z| \le 1$. By \eqref{inter}
  with $Du = f$, we get
$$
|D_y h(y,z) - D_y h(y',z) | $$$$ = |Du(\psi^{-1} (y) \,  + z)  - D
u(\psi^{-1} (y))
 - Du(\psi^{-1} (y') \,  + z)  + D
u(\psi^{-1} (y')) | \, \|D \psi^{-1}\|_0
$$
$$
\le C_1 [Du]_{\gamma} |y - y'|^{\delta} \, |z|^{\gamma'},
$$
for any $y, y' \in \R^d$. Let now $|z|>1$; we find, for
 $y, y' \in \R^d$ with $|y- y' | \le 1$,
$$
|D_y h(y,z) - D_y h(y',z) | \le
C_2 [Du]_{\gamma} |y - y'|^{\gamma}
 \le C_2 [Du]_{\gamma} |y - y'|^{\gamma - \gamma'}.
$$
On the other hand,  if $|y- y' | > 1$, $|z| >1$,
$$
|D_y h(y,z) - D_y h(y',z) | \le 4 \| Du\|_{0} |y - y'|^{\gamma-
\gamma'}.
$$
In conclusion, the second part of (2) is verified with
 $\delta = \gamma- \gamma'$ and
$$
K_2 (z) = C_3 \| Du\|_{\gamma} \,
 ( 1_{\{ |z|\le 1 \}} |z|^{\gamma'}
+
 1_{\{ |z|> 1 \}}).
$$
 (note that $\int_{\R^d  } K_2(z)^p  \, \nu (dz) < \infty$,
 for any $p \ge 2$, since $2 \gamma' > \alpha$).
 Since $C_{b}^{\gamma}\left(  \mathbb{R} ^{d} , \R^d \right)
  \subset C_{b}^{\gamma - \gamma'}
  \left(  \mathbb{R} ^{d} , \R^d \right) $,
 we deduce that both (1) and (2) hold with
 $\delta = \gamma - \gamma'.$

Applying \cite[Theorem 3.4]{Ku}, we get that
  $ y \mapsto Z_t^y$ is $C^1$, $P$-a.s., and this proves our
  assertion.
 We finally note that  \cite[Theorem 3.4]{Ku} also provides
  a formula for  $H_t^y = D Z_t^y = DY_t^y$, i.e.,
  \begin{align*}
 H_t^y & =  I + \lambda  \int_0^t Du( \psi^{-1} (Z_s^y))
  \, D\psi^{-1}(Z_s^y) \, H_s^y \, ds
 \\ & + \int_0^t \int_{  \R^d \setminus \{ 0\}  }
  \Big ( D_y h(Z_{s-}^y, z) \, H_{s-}^y  \Big) \,
 \, \tilde N(ds, dz), \;\; t \ge 0, \; y \in \R^d.
\end{align*}
 The stochastic integral is meaningful, thanks to
 (2) in \eqref{ch} and also to
 the fact that  \cite[assertion (3.10)]{Ku}
  implies that, for any $t>0$, $p \ge 2$,
  $ \sup_{0 \le s \le t} E[ |H_s|^p ] $ $< \infty$.
 The proof is complete.
 \end{proof}

 \noindent \textit{Proof of Theorem \ref{uno}.}
   We may assume that  $1 - \alpha/2 < \beta <  2 - \alpha$.
  We will deduce the assertion from Theorem \ref{uno1}.

  Since
  $\alpha \ge 1$, we can apply
   Theorem \ref{reg} and find
   a solution $u_{\lambda}
   \in C^{1+ \gamma}_b (\R^d, \R^d)$
   to the resolvent equation \eqref{resolv1} with
    $\gamma = \alpha - 1 + \beta \in (0,1)$.
    By
  the last assertion of Theorem \ref{reg}, we may
    choose $\lambda$ sufficiently large in order
    that $\| Du\|_0 = \| Du_{\lambda}\|_0 < 1/3$.
   The crucial assumption about $\gamma$ and $\alpha$ in
 Theorem \ref{uno1} is satisfied. Indeed
 $
 2 \gamma = 2\alpha - 2 + 2 \beta > \alpha
 $ since $\beta > 1 - \alpha/2$.
 By  Theorem \ref{uno1} we obtain the result.
 \qed

%\vskip 2mm
%As a consequence of \eqref{ciao} in  Theorem \ref{uno}, using the Kolmogorov
%continuity theorem (see  \cite[page 218-220]{protter}
% and \cite[Section 3.2]{Ku}), we obtain

\begin{remark}\label{flow1} {\em Thanks  to Theorem
\ref{uno} we may define a stochastic flow associated to \eqref{SDE}.
  To this purpose, note that by  (ii) we have
  $X_t^x =\xi_t (x)$, $t \ge 0,$ $x \in \R^d$,
  $P$-a.s.., where $\xi_t$
  is a homeomorphism from $\R^d$ onto $\R^d$.
   Let $\xi_t^{-1}$ be the inverse map.
  As in \cite[Section 3.4]{Ku}, we set
  $
  \xi_{s,t}(x) = \xi_t \circ \xi_s^{-1}(x),\;\; 0 \le
  s \le t, \; x \in \R^d.
  $

 The family $(\xi_{s,t})$ is a stochastic flow since verifies the
 following properties ($P$-a.s):
 (i) for any $x \in \R^d$, $(\xi_{s,t}(x))$
 is a c\`adl\`ag  process with respect to $t$ and a
  c\`adl\`ag  process with respect $s$; (ii) $\xi_{s,t}:
  \R^d \to \R^d$ is an  onto homeomorphism, $s \le t$; (iii)
  $\xi_{s,t}(x)$ is  the unique solution to \eqref{SDE}
  starting from $x$ at time $s$; (iv)
   we have $\xi_{s,t}(x)
=\xi_{u,t}(\xi_{s,u}(x))$, for all $0\leq s\leq u\leq t $,
$x\in{\mathbb{R}}^{d}$, and $\xi_{s,s}(x)=x$.}
\end{remark}

%%%
\def\cii{
\begin{remark}\label{lip} {\em Pathwise uniqueness and
   assertions
  (i) and (ii) of Theorem \ref{uno}
 can be proved  for more general  SDEs like
 $dX_t = (b(X_t ) + F(X_t))dt + dL_t$
  with $F : \R^d \to \R^d$ which is Lipschitz continuous.
 The proof is similar to the one of
    Theorem \ref{uno1}.
  Indeed with the term $F(X_t) dt$, we would have in
   the right-hand side of formula
  \eqref{itok} also the regular term
$$
   \int_0^t Du(X_s) F(X_s) ds,
$$
 where $u$ solves $\lambda u  - {\cal L}u -
  b \cdot Du = b$. The new term
   does not create any additional difficulty in proving (i)
 and (ii).
%Also the initial condition $x \in \R^d$ can be replaced by
%  any ${\cal F}_0$-measurable random variable $\xi$.

 }
\end{remark}
}
 %%%%%%%%%

\vskip  3mm \noindent   \textit{Acknowledgements.}
  The author is grateful to
 F. Flandoli for drawing his attention to
 a  question by L. Mytnik
  which was the starting point of this work. He also
  thanks the Newton Institute (Cambridge) for good working conditions.

\vspace{ 5 mm } {\small
 Dipartimento di Matematica, Universit\`a
di Torino \par
  via Carlo Alberto 10  \ 10123
  \par  Torino, Italy \par
 e-mail: enrico.priola@unito.it }
\par \ \par


\begin{thebibliography}{99999}                                                                                            %
{\footnotesize

\bibitem{A}  D. Applebaum :   { L\'evy processes
and stochastic calculus,} Cambridge Studies
 in Advanced Mathematics
 93, Cambridge University Press, 2004.


 \bibitem{bassJFA} R.F. Bass :
 {\it Regularity results for stable-like operators,}
 J. Funct. Anal.  \textbf{257}
(2009),   2693-2722.

 \bibitem{basschen}
 R.F. Bass and  Z.Q. Chen :
 {\it Systems of equations driven by stable processes,}
 Probab. Theory Related Fields \textbf{134} (2006),
  175-214.

\bibitem{DL}  G. Da Prato and A.   Lunardi
  {\it On the Ornstein-Uhlenbeck operator in spaces
 of continuous
  functions,} {J. Funct. Anal.}   \textbf{131}
   (1995),
  94-114.

\bibitem{DaPratoFlandoli}
 G. Da Prato and F.  Flandoli :
  {\it Pathwise uniqueness for a class of SDE in Hilbert spaces and
 applications,} {J. Funct. Anal.} \textbf{259} (2010),
 243-267.


\bibitem{D}  A.M. Davie :
 {\it Uniqueness of solutions of stochastic differential
 equations,} Int. Math. Res. Notices IMRN, no. \textbf{24}
 (2007) Art.
 ID rnm124, 26 pp..



\bibitem{FGP} F. Flandoli, M.   Gubinelli and E. Priola :
 {\it Well-posedness
 of the transport equation by stochastic perturbation,}
  {Invent. Math.}  \textbf{180} (2010), 1-53.

\bibitem{FF} E. Fedrizzi and F. Flandoli :
  \textit{ Pathwise uniqueness and continuous dependence for SDEs
 with nonregular drift,} Preprint Arxiv (2010)
    arXiv:1004.3485v1.


%\bibitem[FGP10b]{FGP1}
%  F. Flandoli, M. Gubinelli, E. Priola, Flow of diffeomorphisms for
% SDEs with unbounded H\"older continuous drift, published online
%26/02/2010 on Bulletin des Sciences Mathematiques (DOI
%10.1016/j.bulsci.2010.02.003).



 \bibitem{GM}
 I. Gy\"ongy and T.  Martinez :
 {\it On stochastic differential equations
 with locally
 unbounded drift,} Czechoslovak Math. J. (4) \textbf{51 (126)}
 (2001) 763-783.
% fa l'unicita' forte


\bibitem{jacob}
 N. Jacob :
 Pseudo differential operators and Markov processes. Vol.
 I. Fourier analysis and semigroups,
  Imperial College Press, London,
 2001.


 \bibitem{kolokoltsov} V. Kolokoltsov :
 {\it Symmetric stable laws and stable-like jump-diffusions,}
 Proc. London Math. Soc. (3) \textbf{80} (2000),  725-768.



\bibitem {Kr} N.V. Krylov : {
 Lectures on elliptic and parabolic equations in
 H\"{o}lder spaces,} Graduate Studies in Mathematics,
 12, American
 Mathematical Society, Providence, RI, 1996.


\bibitem{KR05} N.V. Krylov and M. R\"{o}ckner :
 {\it Strong solutions to
stochastic equations with singular time dependent drift,}
 {Probab.
 Theory Relat. Fields} \textbf{131 }(2005), 154-196.

\bibitem{Ku}
 H. Kunita :
 {\it Stochastic differential equations based on L\'evy
processes and stochastic flows of diffeomorphisms,} Real and
stochastic analysis, Trends Math., Birkh\"auser Boston, MA (2004),
305-373.



\bibitem{Kurekov}
 V.P. Kurenok :
 {\it Stochastic
 equations with
 time-dependent drift driven by L\'evy processes,}
 J. Theoret. Probab. \textbf{20} (2007),  859-869.

%This paper studies the stochastic differential equation driven by L´evy processes with a bounded
%measurable time dependent drift. With a natural condition on the characteristic exponent, the
%author proves the existence of (weak) solutions. This extends some known results.
%Reviewed by


 \bibitem {L} A.  Lunardi : Interpolation theory.
  Second edition,
  Lecture Notes, Scuola Normale Superiore di Pisa (New Series),
   Edizioni della Normale, Pisa, 2009.




\bibitem {Pi} J. Picard :
 {\it On the existence of smooth densities for jump
processes,}  { Probab. Theory Related Fields} \textbf{105}
  (1996),
 481-511.

\bibitem {P}
 E. Priola :  { \it Schauder estimates for a class of degenerate
 Kolmogorov equations,}
  Studia Math. \textbf{194}  (2009), 117-153.
 %  (previously in  arxiv ...??).

\bibitem{protter}
 P. E. Protter :  Stochastic Integration and
 Differential Equations.
 Second Edition,
 Stochastic Modelling
 and Applied Probability, Springer-Verlag, Berlin, 2004.



\bibitem{SamTaqqu}
 G. Samorodnitsky and M.S. Taqqu : Stable non-Gaussian random
processes. Stochastic models with infinite variance, Stochastic
Modeling, Chapman \& Hall, New York, 1994.



\bibitem{sato} K.I. Sato :  {
 L\'evy processes and infinite divisible distributions,}
 Cambridge University Press, Cambridge, 1999.

\bibitem{sztonik}
 P. Sztonyk :
 {\it Regularity of harmonic functions for anisotropic fractional
 Laplacian,} Math. Nachr. \textbf{283} (2010), 289-311.


 \bibitem{Tanaka}
 H. Tanaka, M. Tsuchiya and S. Watanabe :
 {\it Perturbation of drift-type for L\'evy processes,}
   J. Math. Kyoto Univ.
 \textbf{14}  (1974), 73-92.

\bibitem {Ver} A.J. Veretennikov :
 {\it Strong solutions and explicit formulas
 for solutions of stochastic integral equations,} {Mat. Sb.},
(N.S.) \textbf{111 (153)} (3) (1980), 434-452.


\bibitem{Za} X. Zhang :
 {\it Strong solutions of SDES with singular drift
 and Sobolev diffusion coefficients,}
     Stochastic Process. Appl. \textbf{115} (2005), 1805-1818.


\bibitem{Za1} X. Zhang :
 {\it Homeomorphism
 flows for non-Lipschitz stochastic differential equations with
 jumps,}  Stochastic Process. Appl.
 \textbf{118} (2008),
 2254-2268.

\bibitem {Zv74} A.K. Zvonkin :
 {\it A transformation of the phase
 space of a diffusion process that
 removes the drif,} {Mat. Sb.} (N.S.)  \textbf{93 (135)}
  (1974), 129-149.

 \bibitem{Zab} J. Zabczyk :
 Topics in stochastic processes,
  Publication of Scuola Normale Superiore,
   Scuola Normale
 Superiore, Pisa,  2004.
}

\end{thebibliography}
\end{document}